\documentclass{article}
\usepackage[top=1in,bottom=1.1in,left=1in,right=1in]{geometry}

\usepackage[T1]{fontenc}
\usepackage{amsmath,amsthm,amsfonts,amssymb,graphicx}
\usepackage{hyperref,verbatim}


\theoremstyle{definition}
    \newtheorem{theorem}{Theorem}[section]
    \newtheorem{lemma}[theorem]{Lemma}
    \newtheorem{proposition}[theorem]{Proposition}
    \newtheorem{corollary}[theorem]{Corollary}
    \newtheorem{definition}[theorem]{Definition}
  \newtheorem{example}[theorem]{Example}
\theoremstyle{remark}
    \newtheorem{remark}[theorem]{Remark}
    %
%
 \newcommand{\N}{\mathbb{N}}                                            
 \newcommand{\R}{\mathbb{R}}                                            
 \newcommand{\eps}{\varepsilon}                                         
 \newcommand{\nor}[1]{\left\Vert#1\right\Vert}                          
 \newcommand{\abs}[1]{\left\vert#1\right\vert}                          
 \newcommand{\comp}{{\small \,\circ\,}}                                 %
 
 \DeclareMathOperator{\fix}{Fix}                                        
 \newcommand{\m}[1]{\overline{#1}}    			                        
 \DeclareMathOperator{\CAT}{CAT}                                        
 \DeclareMathOperator{\diam}{diam}

\DeclareMathOperator*{\argmin}{argmin}

%
%
\begin{document}

\title{The Asymptotic Behavior of the Composition of Firmly Nonexpansive Mappings}
\author{David Ariza-Ruiz$^{a}$, Genaro L\'{o}pez-Acedo$^{a}$, Adriana Nicolae$^{b,c}$}
\date{}

\maketitle
\begin{center}
{\footnotesize
$^{a}$Departamento de An\'alisis Matem\'atico - IMUS, Universidad de Sevilla, Apdo. de Correos 1160, 41080 Sevilla, Spain
\ \\
$^{b}$Department of Mathematics, Babe\c s-Bolyai University, Kog\u alniceanu 1, 400084 Cluj-Napoca, Romania
\ \\
$^{c}$Simion Stoilow Institute of Mathematics of the Romanian Academy, Research group of the project PD-3-0152,\\ 
P.O. Box 1-764, RO-014700 Bucharest, Romania\\
\ \\
E-mail addresses:  dariza@us.es (D. Ariza-Ruiz), glopez@us.es (G. L\'{o}pez-Acedo), anicolae@math.ubbcluj.ro (A. Nicolae)
}
\end{center}

\begin{abstract}

In this paper we provide a unified treatment of some convex minimization problems, which allows for a better understanding and, in some cases, improvement of results in this direction proved recently in spaces of curvature bounded above. For this purpose, we analyze the asymptotic behavior of compositions of finitely many firmly nonexpansive mappings in the setting of $p$-uniformly convex geodesic spaces focusing on asymptotic regularity and convergence results.\\

\noindent {\em Keywords:} Firmly nonexpansive mapping, Convex optimization, Convex feasibility problem, $p$-uniformly convex geodesic space, CAT$(\kappa)$ space

\end{abstract}

\section{Introduction}\label{intro}
Various mathematical problems can be formulated as finding a common fixed point of finitely many firmly nonexpansive mappings.  These include, for instance, the convex feasibility problem~\cite{BauBor96}, which consists of finding a point in the intersection of a finite number of closed convex sets, or the problem of finding a common zero of a finite family of maximal monotone operators~\cite{Com04}.

Firmly nonexpansive mappings were introduced by Bruck~\cite{Bru73} in order to study sunny nonexpansive retractions onto closed subsets of Banach spaces. In Hilbert spaces, these mappings coincide with firmly contractive ones defined by Browder~\cite{Bro67}. A basic example of firmly nonexpansive mappings is the metric projection onto closed convex subsets of Hilbert spaces. This mapping or finite compositions thereof appear in different iterative methods used in the study of convex minimization problems. Results on the asymptotic behavior of some  such methods follow from analyzing compositions of finitely many firmly nonexpansive mappings, as it is the case of the cyclic projection algorithm employed in solving the convex feasibilty problem (see~\cite{BauBor96} for a deep treatment of this topic).

Another important example of firmly nonexpansive mappings is the resolvent of a monotone operator which was introduced by Minty~\cite{Min62}.  Since the subdifferential of a proper convex lower semi-continuous function is a maximal monotone operator, some splitting methods which involve resolvents and are used to solve convex minimization problems can be abstracted to compositions of firmly nonexpansive mappings. Using this approach, Bauschke, Combettes and Reich \cite{BauComRei05} (see also \cite{AckPre80}) proved, in particular, that having two proper convex  lower semicontinuous functions defined on a Hilbert space, one can apply alternatively the two resolvents  to obtain weak convergence to a solution (provided it exists) of a minimization problem associated to the two functions, see (\ref{def-fct-min}). It is noteworthy that the convex feasibilty problem for two sets can be reformulated as a problem of type (\ref{def-fct-min}) for both the consistent and the inconsistent case.

The most straightforward way to find a common fixed point or, more generally, a fixed point of the composition of finitely many firmly nonexpansive mappings is to iterate the mappings cyclically. The particular case of just one mapping was considered in Hilbert spaces by Browder~\cite{Bro67} who proved that, for any starting point, the Picard iterates of a firmly nonexpansive mapping converge weakly to a fixed point, whenever the fixed point set is nonempty. Although the class of firmly nonexpansive mappings is not closed under composition, in Hilbert spaces firmly nonexpansive mappings are averaged and the composition of averaged mappings is averaged. Thus, results for the general case of finitely many firmly nonexpansive mappings can be deduced from the asymptotic behavior of averaged mappings (see also \cite{Tse92} for a more general order of composing the mappings that still preserves weak convergence). In Banach spaces not every firmly nonexpansive mappings is averaged and so extensions of such results to this setting are not immediate. Bruck and Reich~\cite{BruRei77} introduced strongly nonexpansive mappings that generalize firmly nonexpansive ones when the space is uniformly convex. This class of mappings is closed under composition and so one can analyze compositions of firmly nonexpansive mappings using results on the asymptotic behavior of strongly nonexpansive mappings.

More recently, problems where the study of compositions of firmly nonexpansive mappings plays an important role (such as minimization problems \cite{Udr94} or abstract Cauchy problems\cite{May98}) have also been formulated in a nonlinear setting. Since our applications focus on minimizing convex functions in geodesic spaces, we mention some of the work in this direction. The proximal point method and modifications thereof for optimization problems in linear spaces have been extended to Riemanninan manifolds in~\cite{LiLopMar09,LiYao12} and references therein. Spaces of nonpositive curvature in the sense of Alexandrov (also known as CAT$(0)$ spaces) proved to be an appropriate setting for considering such problems too. Jost studied the concept of resolvents for convex functions to develop the theory of harmonic mappings~\cite{Jos97} which motivated Ba\v{c}\'{a}k \cite{Bac12} to extend the proximal point algorithm to this context. In \cite{Bac13}, a proximal splitting algorithm was also approached in this setting (assuming in addition local compactness) and applied to the computation of the geometric median and the Fr\'{e}chet mean of a finite set of points. Regarding the convex feasibility problem, results on the asymptotic behavior of the alternating projection method have been obtained in~\cite{BacSeaSim12,Nic13}. The minimization problem (\ref{def-fct-min}) was studied in this setting in \cite{Ban14}.

The goal of this work is to create a suitable theoretical framework so that previous results can be better understood and in some cases improved. To this end, we present a systematic study of the asymptotic behavior of compositions of a finite number of firmly nonexpansive mappings in $p$-uniformly convex geodesic spaces. Asymptotic regularity is a very important concept used in the study of the asymptotic behavior of sequences and was formally introduced by Browder and Petryshyn in~\cite{BroPet66}. In Section~\ref{section-as-reg} we give asymptotic regularity results for sequences obtained by applying a finite number of firmly nonexpansive mappings in a cyclic way. We assume first that they have a common fixed point, but consider also the case of two mappings which satisfy the weaker condition that their composition is not fixed point free. Moreover, we use techniques which originate from proof mining (see \cite{Koh08} for more details) to provide polynomial rates of asymptotic regularity, thus giving quantitative versions of such results and extending a corresponding result proved in \cite{Nic13}. Section \ref{section-conv} deals with $\Delta$-convergence results for these problems. Assuming appropriate compactness conditions we obtain strong convergence. When the mappings have a common fixed point we apply our findings to the cyclic projection method for finitely many sets in CAT$(\kappa)$ spaces improving previous results from \cite{BacSeaSim12,Bac13}. We show that the results concerning two firmly nonexpansive mappings for which the fixed point set of the composition is nonempty can be used to study the minimization problem (\ref{def-fct-min}) in CAT$(0)$ spaces which partially recovers the main result of \cite{Ban14} and yields a convergence result for the inconsistent convex feasibility problem for two sets.

%
 \section{Preliminaries}
%
\subsection{Some Notions on Geodesics Spaces} \label{subsection-geod-sp}
%
Let $(X,d)$ be a metric space. A {\it geodesic path} that joins two points $x,y \in X$ is a mapping $\gamma:[0,l]\subseteq\R\to X$ such that $\gamma(0)=x$, $\gamma(l)=y$ and $d(\gamma(t),\gamma(t'))=\abs{t-t'}$ for all $t,t'\in[0,l]$. The image $\gamma([0,l])$ of $\gamma$ is called a {\it geodesic segment} from $x$ to $y$.  We denote a point $z \in X$ belonging to such a geodesic segment by $z=(1-t)x\oplus ty$, where $t=d(z,x)/d(x,y)$. $(X,d)$ is a {\it (uniquely) geodesic space} iff every two points in $X$ are joined by a (unique) geodesic path. A subset $C$ of $X$ is {\it convex} iff it contains all geodesic segments that join any two points in $C$. For more details on geodesic metric spaces, see \cite{BriHae99}.

Ball, Carlen and Lieb introduced in~\cite{BalCarLie94} the notion of $p$-uniform convexity which plays an essential role in Banach space theory. Recall that a normed space $(X,\nor{\cdot})$ is said to be {\it $p$-uniformly convex} for $2\leq p<\infty$ iff there exists a constant $C\geq1$ such that for any $x,y,\in X$,
$$
\displaystyle \nor{\frac{x+y}{2}}^p\leq\frac{1}{2}\,\nor{x}^p+\frac{1}{2}\,\nor{y}^p-\frac{1}{C^p}\,\nor{\frac{x-y}{2}}^p.
$$
Recently, Naor and Silberman~\cite{NaoSil11} extended this notion to the setting of geodesic spaces in the following way.
\begin{definition}
Fix $1<p<\infty$. A metric space $(X,d)$ is called {\it $p$-uniformly convex} with parameter $c>0$ iff $(X,d)$ is a geodesic space and for any three points $x,y,z\in X$ and all $t\in[0,1]$,
\begin{equation}\label{eq:p-UC}
d(z,(1-t)x\oplus ty)^p\leq(1-t)\,d(z,x)^p+t\,d(z,y)^p-\frac{c}{2}\,t(1-t)\,d(x,y)^p.
\end{equation}
\end{definition}

Note that inequality~\eqref{eq:p-UC} guarantees that the space $X$ is uniquely geodesic (see~\cite[Lemma~2.2]{Oht07} for a proof in the case $p=2$). Also, any closed convex subset of a $p$-uniformly convex space is again a $p$-uniformly convex space with the same parameter.

It is well-known that any $L_p$ space over a measurable space is $p$-uniformly convex. In the setting of nonlinear spaces, every $\CAT(0)$ space is $2$-uniformly convex with parameter $c=2$. Actually, in this case (\ref{eq:p-UC}) characterizes $\CAT(0)$ spaces. For $\kappa>0$, any $\CAT(\kappa)$ space $X$ with $\diam(X)<\pi/(2\sqrt{\kappa})$ is a $2$-uniformly convex space with parameter $c=(\pi-2\sqrt{\kappa}\,\eps)\,\tan(\sqrt{\kappa}\,\eps)$ for any $0<\eps\leq\pi/(2\sqrt{\kappa})-\diam(X)$, see~\cite{Oht07}. A detailed treatment of CAT$(\kappa)$ spaces can be found, for example, in \cite{BriHae99}.

We recall next a notion of weak convergence in metric spaces. Let $(x_n)$ be a bounded sequence in a metric space $(X,d)$. Then
$$r((x_n)):=\inf\left\{\limsup_{n\to\infty} d(y,x_n):y\in X\right\}$$
is called the {\it asymptotic radius} of $(x_n)$ and the {\it asymptotic center} of $(x_n)$ is the set
$$A((x_n)):=\left\{x\in X:\limsup_{n\to\infty} d(x,x_n)=r((x_n))\right\}.$$
An element of $A((x_n))$ will also be referred to as an asymptotic center. If $X$ is a complete $p$-uniformly convex space, then any bounded sequence has a unique asymptotic center since $X$ admits a modulus of uniform convexity that does not depend on the radius of balls (see~\cite{EspFerPia10}). We say that a sequence $(x_n)$ {\it $\Delta$-converges} to $x\in X$ iff $x$ is the unique asymptotic center of every subsequence of $(x_n)$. In this case we call $x$ the {\it $\Delta$-limit} of $(x_n)$. This concept was introduced by Lim~\cite{Lim76}. It is easy to see that in a complete $p$-uniformly convex space, any bounded sequence has a $\Delta$-convergent subsequence.

In the setting of $\CAT(0)$ spaces, $\Delta$-convergence is equivalent to another concept of weak convergence that uses projections on geodesic segments (see \cite{EspFer09}). Reasoning as in \cite{EspFer09}, one can see that this equivalence also holds for $p$-uniformly convex spaces.

Let $(X,d)$ be a metric space. Given $C$ a nonempty subset of $X$, the {\it distance of a point} $x \in X$ to $C$ is $\text{dist}(x,C) := \inf\{d(x,c) : c \in C\}.$ If $B$ and $C$ are nonempty subsets of $X$, we denote $\text{dist}(B,C) := \inf\{d(b,c) : b \in B, c \in C\}$. The {\it metric projection} $P_C$ onto $C$ is the mapping defined by $P_C(x):=\{ c \in C : d(x,c)=\mbox{dist}(x,C)\}$, for every $x\in X$.

Let $X$ be a compete CAT$(\kappa)$ space (with $\diam(X)<\pi/(2\sqrt{\kappa})$ for $\kappa > 0$) and $C$ a closed and convex subset of $X$. Then $P_C : X \to C$ is well-defined and singlevalued (see \cite[Proposition 2.4, page 176]{BriHae99} and \cite[Proposition 3.5]{EspFer09}). If $\kappa = 0$, $P_C$ is also nonexpansive. For $\kappa > 0$ we have that for every $x\in X, y\in C$, $d(P_C(x),y)\leq d(x,y)$. Moreover, in this case $P_C$ is a uniformly continuous mapping because $X$ is bounded and admits a modulus of uniform convexity that does not depend on the radius of balls (see the reasoning in the proof of \cite[Proposition 4.5]{EspNic13}). In fact,  $P_C$ is a Lipschitz mapping (one can easily derive this from the proof of \cite[Theorem 4.1]{Pia13}).

We will also use the following inequality which originates from the work of Reshetnyak (see, for instance, \cite[Theorem 2.3.1]{Jos97} or \cite[ Lemma 2.1]{LanPavSch00} for a simple proof). Let $X$ be a $\CAT(0)$ space. Then for every $x,y,u,v \in X,$
\begin{equation}\label{eq:CAT(0).four.points}
d(x,y)^2 + d(u,v)^2 \le d(x,v)^2 + d(y,u)^2 + 2d(x,u)d(y,v).
\end{equation}

We finish this subsection with two simple results that will be used later on.

\begin{lemma} \label{lemma-conv-seq}
Let $(a_n)$ and $(\eps_n)$ be two sequences of nonnegative numbers. If for every $n \in \mathbb{N}$
\[a_{n+1} \leq a_n + \eps_n,\]
and $\displaystyle \sum_{n = 0}^\infty \eps_n < \infty$, then the sequence $(a_n)$ is convergent.
\end{lemma}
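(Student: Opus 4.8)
The plan is to convert the nearly-monotone sequence $(a_n)$ into a genuinely monotone bounded one by absorbing the error terms into the sequence itself. The standard device is to use the \emph{tail} of the series: since $\sum_{n=0}^\infty \eps_n < \infty$, the quantity $R_n := \sum_{k=n}^\infty \eps_k$ is finite for every $n$, and I would set $b_n := a_n + R_n$.

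First I would show that $(b_n)$ is nonincreasing. Splitting off the $n$-th term as $R_n = \eps_n + R_{n+1}$ and invoking the hypothesis $a_{n+1} \le a_n + \eps_n$ gives
\[
b_{n+1} = a_{n+1} + R_{n+1} \le a_n + \eps_n + R_{n+1} = a_n + R_n = b_n.
\]
Since each $a_n \ge 0$ and each $\eps_k \ge 0$, we also have $b_n \ge 0$ for all $n$, so $(b_n)$ is a nonincreasing sequence bounded below, and therefore converges to some limit $\ell \ge 0$.

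Finally I would recover the convergence of $(a_n)$ itself. Because the series $\sum \eps_n$ converges, its tails satisfy $R_n \to 0$ as $n \to \infty$. Writing $a_n = b_n - R_n$ and passing to the limit then yields $a_n \to \ell$, which completes the argument.

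As for the difficulty, there is essentially no substantive obstacle; the only point requiring care is the correct choice of the correction term. Adding the tail $R_n$ (rather than, say, the partial sum $\sum_{k<n}\eps_k$) is precisely what forces monotonicity of $(b_n)$ while keeping it bounded below by $0$. Once this choice is made, the remaining steps follow immediately from the convergence of $\sum_{n=0}^\infty \eps_n$.
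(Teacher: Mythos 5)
Your proof is correct. The paper states this lemma without proof (it is offered as one of ``two simple results''), and your argument --- passing to $b_n := a_n + \sum_{k=n}^{\infty}\eps_k$, which is nonincreasing and bounded below, and then using that the tails of the convergent series vanish --- is precisely the standard argument the authors presumably had in mind.
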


\begin{lemma}\label{lemma:Real.numbers.inequality}
Let $l\ge 0$ and $M>0$. Then there exists a constant $R$ depending only on $l$ and $M$ such that for all $ 0 \le a\leq M$ and $0\leq b\leq 1$,
$$
(a+b)^l\leq a^l+R\,b.
$$
\end{lemma}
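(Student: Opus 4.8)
The plan is to reduce the claim to a uniform bound on the increment $(a+b)^l-a^l$ of the power function $f(t)=t^l$. Since $0\le a\le M$ and $0\le b\le 1$, both $a$ and $a+b$ lie in the fixed compact interval $[0,M+1]$, so the statement is really an assertion about how much $f$ can grow over a subinterval of length $b$. I would therefore aim to prove $f(a+b)-f(a)\le R\,b$ with $R$ controlled by the size of $f'$ on $[0,M+1]$, which depends only on $l$ and $M$.

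First I would dispose of the degenerate cases: if $b=0$ the inequality is an equality for any $R\ge 0$, and if $l=0$ then $(a+b)^l=a^l=1$, so again any $R\ge 0$ works. For the main case $l\ge 1$ I would apply the mean value theorem to $f$ on $[a,a+b]$, obtaining some $\xi\in(a,a+b)\subseteq[0,M+1]$ with
\[
(a+b)^l-a^l=f'(\xi)\,b=l\,\xi^{\,l-1}\,b.
\]
Because $l-1\ge 0$ and $\xi\le M+1$, the factor $\xi^{\,l-1}$ is bounded by $(M+1)^{\,l-1}$, whence
\[
(a+b)^l\le a^l+l\,(M+1)^{\,l-1}\,b.
\]
Thus $R:=\max\{1,\,l\,(M+1)^{\,l-1}\}$ does the job and depends only on $l$ and $M$, as required. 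Equivalently, one can avoid the mean value theorem and use the convexity of $f$ for $l\ge 1$, which yields the secant bound $f(a+b)-f(a)\le f'(a+b)\,b=l\,(a+b)^{l-1}b\le l\,(M+1)^{l-1}b$ directly.

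The only genuinely delicate point is obtaining a bound on $f'$ that is \emph{uniform} in $a$ and $b$, and this is precisely where the restriction to a compact interval and the sign of $l-1$ enter: for $l\ge 1$ the map $t\mapsto t^{l-1}$ is nondecreasing and finite on $[0,M+1]$, so its supremum is attained at the right endpoint $M+1$. I expect this to be the crux, in the sense that it pins down the exact form of $R$. It is worth noting that for $0<l<1$ the derivative $l\,t^{l-1}$ is unbounded as $t\to 0^{+}$ (indeed, taking $a=0$ and letting $b\to 0^{+}$ shows that $b^l\le R\,b$ cannot hold with a fixed $R$), so the estimate, and the way I would apply it, implicitly relies on $l\ge 1$.
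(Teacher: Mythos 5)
Your proof is correct wherever the lemma itself is correct, and it takes a different and more economical route than the paper. The paper argues in two cases: for $l\in[0,2]$ it simply asserts that $(a+b)^l-a^l\le(1+2M)b$, and for $l>2$ it invokes a Kaczmarz--Steinhaus inequality, $\abs{1+x}^l\le 1+l\,x+\sum_{i=2}^{[l]}\binom{l}{i}x^i+\alpha\abs{x}^l$, and bounds the resulting terms. Your single mean value theorem estimate $(a+b)^l-a^l=l\,\xi^{\,l-1}b\le l(M+1)^{l-1}b$ handles every $l\ge1$ uniformly, is self-contained, and produces an explicit admissible constant $R=\max\{1,\,l(M+1)^{l-1}\}$; nothing is lost relative to the paper's argument, since $R$ is only required to depend on $l$ and $M$.

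Your closing remark deserves emphasis, because it is not a limitation of your method but a correction to the statement. For $0<l<1$ the conclusion is false: taking $a=0$ reduces it to $b^l\le R\,b$, i.e.\ $b^{l-1}\le R$, which fails as $b\to0^+$. The paper's own first case --- the ``clear'' bound $(a+b)^l-a^l\le(1+2M)b$ for $l\in[0,2]$ --- breaks on exactly this subrange, so the hypothesis should read $l=0$ or $l\ge1$. This is harmless downstream: the lemma is used in Theorem~\ref{thm-as-reg-P1} with $l$ the exponent from property $(P_1)$, which is only assumed positive there, but in every concrete instance in the paper ($p$-uniform convexity with $p>1$, metric projections with $l=2$) one has $l\ge1$. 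In short, your restriction to $l\ge1$ is forced by the mathematics, not a gap in your argument.
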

\begin{proof}
When $l \in [0,2]$, clearly $(a+b)^l - a^l \le (1 + 2M)b$. For $l>2$, the result is an immediate consequence of a Kaczmarz-Steinhaus inequality~\cite[page~144]{Bul98} which states that there exists a constant $\alpha$ depending only on $l$ such that for all $x\in\R$,
$$
\abs{1+x}^l\leq1+l\,x+\sum_{i=2}^{[l]}\binom{l}{i} x^i+\alpha\abs{x}^l.
$$
\qed
\end{proof}
%
 \subsection{Firmly Nonexpansive Type Mappings in Geodesic  Spaces}
%
Firmly nonexpansive mappings were first introduced by Browder~\cite{Bro67}, under the name of firmly contractive, in the setting of Hilbert spaces, and later by Bruck \cite{Bru73} in the context of Banach spaces. Recently Bruck's definition was extended to a nonlinear setting in~\cite{AriLeuLop14} (see also \cite{Nic13}). We recall this definition using the framework of $p$-uniformly convex spaces.

\begin{definition}
Let $C$ be a nonempty subset of a $p$-uniformly convex space $(X,d)$. We say that a mapping $T:C\to X$ is {\it firmly nonexpansive} iff
\begin{equation}\label{def-lambda-fe}
d(Tx,Ty)\leq d((1-\lambda)x\oplus\lambda Tx,(1-\lambda)y\oplus \lambda Ty),
\end{equation}
for all $x,y\in C$ and $\lambda\in [0,1[$.
\end{definition}

As in the linear case, any sunny nonexpansive retraction is firmly nonexpansive. In particular, the metric projection onto closed and convex subsets of a complete CAT$(0)$ space is firmly nonexpansive. Having $X$ a complete CAT$(0)$ space and a function $f : X \to ]-\infty,+\infty]$ that is convex, lower semi-continuous and proper (i.e., not constantly equal to $+\infty$), the {\it resolvent of $f$} defined by
\[J_\lambda^f(x) := \argmin_{z \in X}\left[f(z) + \frac{1}{2\lambda}d(x,z)^2\right],\]
where $\lambda > 0$, is also firmly nonexpansive. In the subsequent section we will use the following inequality (for its proof see, for instance, \cite[Lemma 3.2]{Bac13}): for every $x,y \in X$,
\begin{equation} \label{ineq-res}
f(J_\lambda^f(x)) + \frac{1}{2\lambda}d(J_\lambda^f(x),x)^2 \le f(y) + \frac{1}{2\lambda}d(x,y)^2 -\frac{1}{2\lambda}d(J_\lambda^f(x),y)^2 .
\end{equation}

Bruck extended in \cite{Bru82} the notion of strongly nonexpansive mappings and introduced the so-called strongly quasi-nonexpansive ones with the aim of proving convergence of iterations built by applying finitely many such mappings. We consider in the following a related property which is useful when studying the asymptotic behavior of sequences obtained by applying a finite number of firmly nonexpansive mappings in the setting of $p$-uniformly convex spaces (see also the notion of strongly attracting mappings in \cite{BauBor96}).

\begin{definition}
Let $C$ be a nonempty  subset of a metric space $(X,d)$. A mapping $T:C\to X$ is said to satisfy {\it property~\eqref{eq:def.P1.maps.beta}} iff $\fix(T)\neq\emptyset$ and there exist $l>0$ and $\beta>0$ such that
\begin{equation*}\tag{$P_1$}\label{eq:def.P1.maps.beta}
d(Tx,u)^l\leq d(x,u)^l-\beta\,d(Tx,x)^l,
\end{equation*}
for all $x\in C$ and $u\in\fix(T)$.
\end{definition}

If $X$ is $p$-uniformly convex with parameter $c$, then every firmly nonexpansive mapping $T:C\to X$ with $\fix(T)\neq\emptyset$ satisfies~\eqref{eq:def.P1.maps.beta} with $l=p$ and $\beta=c/2$. Indeed, given $x\in C$ and $u\in\fix(T)$, for any $\lambda\in[0,1[$ we have that
\begin{align*}
d(u,Tx)^p &=d(Tu,Tx)^p\leq d(u,(1-\lambda)x\oplus\lambda Tx)^p\\
&\leq(1-\lambda)\,d(u,x)^p+\lambda\,d(u,Tx)^p-\frac{c}{2}\,\lambda(1-\lambda)\,d(x,Tx)^p.
\end{align*}
Hence, $d(u,Tx)^p\leq d(u,x)^p-(c/2)\lambda \,d(x,Tx)^p$. Taking limit as $\lambda\to1^-$, we get the conclusion.

A second example of mappings satisfying~\eqref{eq:def.P1.maps.beta} is the metric projection in spaces of curvature bounded above. Let $X$ be a complete $\CAT(\kappa)$ (with $\text{diam}(X) < \pi/(2\sqrt{\kappa})$ for $\kappa>0$) and $C\subseteq X$ closed and convex. Recall that if $\kappa > 0$, then the metric projection $P_C:X\to C$ need not be nonexpansive. However, it satisfies~\eqref{eq:def.P1.maps.beta} with $l=2$ and $\beta=c/2$. To see this, let $x\in X$, $u\in C$ and $\lambda\in ]0,1[$. Then,
\begin{align*}
d(P_C(x),u)^2 &=d(P_C\big((1-\lambda)x\oplus\lambda P_C(x)\big),u)^2 \leq d\big((1-\lambda)x\oplus\lambda P_C(x),u\big)^2   \\
&\leq (1-\lambda)d(x,u)^2+\lambda d(P_C(x),u)^2-\frac{c}{2}\lambda(1-\lambda)d(x,P_C(x)).
\end{align*}
Hence, $d(P_C(x),u)^2\leq d(x,u)^2-(c/2)\lambda\,d(x,P_C(x))$ and we only need to let $\lambda\to1^-$.\\

Given finitely many mappings satisfying $(P_1)$ one can relate the intersection of their fixed point sets with the fixed point set of their composition.

\begin{proposition} \label{prop-int-fp-set}
Let $(X,d)$ be a metric space and let $\{T_i:1\leq i\leq r\}$ be a family of mappings with property~\eqref{eq:def.P1.maps.beta}. If $\bigcap_{i=1}^r\fix(T_i)\neq\emptyset$, then
$$\bigcap_{i=1}^r\fix(T_i)=\fix(T_1\comp\cdots\comp T_r).$$
\end{proposition}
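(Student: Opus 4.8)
The plan is to prove the two inclusions separately. The inclusion $\bigcap_{i=1}^r\fix(T_i)\subseteq\fix(T_1\comp\cdots\comp T_r)$ is immediate: if $u$ is fixed by every $T_i$, then applying the mappings one after another leaves $u$ unchanged, so $u$ is a fixed point of the composition. All the content lies in the reverse inclusion, so I would fix a point $x\in\fix(T_1\comp\cdots\comp T_r)$ together with a common fixed point $u\in\bigcap_{i=1}^r\fix(T_i)$ (which exists by hypothesis), and aim to show $T_i x=x$ for every $i$.

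To track the composition I would introduce the orbit $w_0:=x$ and $w_k:=T_{r-k+1}(w_{k-1})$ for $1\le k\le r$, so that each step applies exactly one of the $T_i$ and $w_r=(T_1\comp\cdots\comp T_r)(x)=x$ since $x$ is a fixed point of the composition; in particular $w_0=w_r=x$.

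The key observation is that property~\eqref{eq:def.P1.maps.beta} should not be summed directly, because each $T_i$ carries its own exponent and constant and a naive telescoping of the powered inequalities would require a common exponent. Instead I would first extract from~\eqref{eq:def.P1.maps.beta}, applied at each step with the common fixed point $u$, the plain monotonicity $d(w_k,u)\le d(w_{k-1},u)$ (dropping the nonnegative defect term and using that $t\mapsto t^{1/l}$ is increasing on $[0,\infty)$, where $l>0$ is the exponent attached to the mapping used at step $k$). Chaining these inequalities around the cycle gives $d(w_r,u)\le\cdots\le d(w_0,u)$, but $w_0=w_r=x$ forces all the terms to coincide, so $d(w_k,u)=d(w_{k-1},u)$ for every $k$.

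Finally, feeding these equalities back into~\eqref{eq:def.P1.maps.beta} at each step collapses it to $0\le-\beta\,d(w_k,w_{k-1})^l$, and since $\beta>0$ this yields $w_k=w_{k-1}$ for every $k$. Consequently all $w_k$ equal $x$, and each step reads $T_{r-k+1}(x)=x$; as $k$ ranges over $1,\dots,r$ this says $x\in\fix(T_i)$ for every $i$, giving $x\in\bigcap_{i=1}^r\fix(T_i)$. The only genuine subtlety—the possibly differing exponents and constants across the family—is handled precisely by passing through the monotonicity statement rather than attempting to add the powered inequalities.
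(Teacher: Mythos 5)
Your argument is correct and is essentially the paper's own proof: both establish the chain $d(w_r,u)\le\cdots\le d(w_0,u)$ of distances to a common fixed point $u$, observe that it collapses to equalities because $w_0=w_r=x$, and then feed the equalities back into property~\eqref{eq:def.P1.maps.beta} to force each intermediate step to be trivial. The only difference is presentational — you treat all $r$ steps of the orbit simultaneously, while the paper peels off $T_1$, concludes $x\in\fix(T_2\comp\cdots\comp T_r)$, and recurses — and your explicit remark about the per-mapping exponents $l$ and constants $\beta$ is a correct reading of the definition, handled the same way the paper implicitly handles it.
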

\begin{proof}
Clearly $\bigcap_{i=1}^r\fix(T_i)\subseteq\fix(T_1\comp\cdots\comp T_r)$. Conversely, let $x\in\fix(T_1\comp\cdots\comp T_r)$. If $u\in \cap_{i=1}^r\fix(T_i)$ and  $y=(T_2\comp\cdots\comp T_r)(x)$, then
\begin{equation}\label{eq:prop.Fix.comp}
d(T_1 y,u)\leq d(y,u)\leq\cdots\leq d(x,u).
\end{equation}
Notice that $T_1y=x$. Then, by~\eqref{eq:prop.Fix.comp}, $d(T_1y,u)=d(y,u)$. Since $T_1$ has property~\eqref{eq:def.P1.maps.beta}, we deduce that $\beta\,d(y,T_1y)^l\leq d(u,y)^l-d(u,T_1y)^l=0$, that is, $y=T_1y$. Hence, $x\in \fix(T_2\comp\cdots\comp T_r)$ and  $T_1 x=x$ . Continuing in this way we obtain  the result.
\qed
\end{proof}

The following also purely metric condition is more particular than~\eqref{eq:def.P1.maps.beta} and is still satisfied by any firmly nonexpansive mapping in CAT$(0)$ spaces. In this setting every mapping for which the property below holds is nonexpansive. Note also that in Hilbert spaces, this notion coincides with firm nonexpansivity (see also the remark in \cite[page 658]{Ban14}).

\begin{definition}
Let $C$ be a nonempty subset of a metric space $(X,d)$. A mapping $T:C\to X$ satisfies {\it property~\eqref{eq:def.P2.maps}} iff
\begin{equation*}\tag{$P_2$}\label{eq:def.P2.maps}
2d(Tx,Ty)^2\leq d(x,Ty)^2+d(y,Tx)^2-d(x,Tx)^2-d(y,Ty)^2,
\end{equation*}
for all $x,y\in C$.
\end{definition}

Any mapping that satisfies~\eqref{eq:def.P2.maps} and has fixed points also satisfies~\eqref{eq:def.P1.maps.beta} with $l=2$ and $\beta=1$. Indeed, if $T:C\to X$ satisfies~\eqref{eq:def.P2.maps} and $\fix(T)\neq\emptyset$, then for all $x\in C$ and $u\in\fix(T)$ we have that
$$
2d(Tx,u)^2=2d(Tx,Tu)^2\leq d(x,u)^2+d(u,Tx)^2-d(x,Tx)^2,
$$
from where $d(Tx,u)^2\leq d(x,u)^2-d(x,Tx)^2$. However, the converse implication does not hold. To see this, it suffices to take $T : [-1/4,1/3] \to \mathbb{R}$, $Tx=2x^2$. Then $T$ satisfies~\eqref{eq:def.P1.maps.beta} with $l=2$ and $\beta = 1/3$, but~\eqref{eq:def.P2.maps} fails for $x=-1/4$ and $y=0$.

Note that the metric projection and the resolvent are well-defined in less regular frameworks than complete spaces of curvature bounded above, such as complete uniform Busemann nonpositively curved spaces (see \cite{Jos97}). However, in this setting they may fail to be nonexpansive or satisfy property~\eqref{eq:def.P1.maps.beta}. Since we derive our results concerning the convex feasibility problem and the minimization problem (\ref{def-fct-min}) from the asymptotic behavior of mappings satisfying properties~\eqref{eq:def.P1.maps.beta}  or~\eqref{eq:def.P2.maps}, we state these results in the context of spaces of curvature bounded above.

%
\section{Asymptotic Regularity} \label{section-as-reg}
%
In this section we analyze the asymptotic behavior (also from a quantitative point of view) of the sequence generated by applying iteratively a finite number of firmly nonexpansive type mappings in a cyclic way. We study the case when the considered mappings have a common fixed point, but focus also on the situation when they satisfy the weaker assumption that the fixed point set of their composition is nonempty.

Let $(X,d)$ be a metric space. A mapping $T : X \to X$ is said to be {\it asymptotically regular at $x \in X$} iff $\displaystyle\lim_{n \to \infty}d(T^n x, T^{n+1}x) = 0$ and it is {\it asymptotically regular} iff it is asymptotically regular at every $x \in X$. Likewise, a sequence $(x_n) \subseteq X$ is {\it asymptotically regular} iff $\displaystyle\lim_{n \to \infty} d(x_n, x_{n+1}) =0$. A rate of convergence of $\left(d(x_n, x_{n+1})\right)$ towards $0$ will be called a {\it rate of asymptotic regularity}. As mentioned in the introductory section, asymptotic regularity is an important tool used in the study of the asymptotic behavior of sequences.

In the sequel, having $n,r\in\N$, $r \ge 1$, we use the notation $\m{n}=n\,(\mbox{mod }r)+1$. Let $r \in \N$, $r \ge 1$ and $T_1,\ldots, T_r:X\to X$. We consider the following cyclic method allowing errors in the evaluation of the values of the mappings $T_i$: given $x_0\in X$, define the sequence $(x_n)$ in $X$ by
\begin{equation*}\label{it:inexact}\tag{$A_1$}
d(x_{n+1},T_{\m{n}}x_n)\leq\eps_n,\quad\mbox{for each }n\in\N,
\end{equation*}
where $\displaystyle\sum_{n= 0}^\infty \eps_n<\infty$.

We prove below that the sequence $(x_n)$ is asymptotically regular provided the mappings used to define it satisfy property~\eqref{eq:def.P1.maps.beta} and have a common fixed point. The result applies, in particular, to firmly nonexpansive mappings in the setting of $p$-uniformly convex spaces.

\begin{theorem} \label{thm-as-reg-P1}
Let $(X,d)$ be a metric space and $T_1,\ldots,T_r:X\to X$ satisfy property~\eqref{eq:def.P1.maps.beta}. If $\bigcap_{i=1}^r\fix(T_i)\neq\emptyset$, then given any starting point $x_0 \in X$, the sequence $(x_n)$ defined by \eqref{it:inexact} is asymptotically regular.
\end{theorem}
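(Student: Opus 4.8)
The plan is to exploit property~\eqref{eq:def.P1.maps.beta} to control the displacement at each step by a telescoping argument, using a fixed point $u\in\bigcap_{i=1}^r\fix(T_i)$ as a reference. Fix such a $u$. The key observation is that each $T_i$ satisfies $d(T_ix,u)^l\leq d(x,u)^l-\beta\,d(T_ix,x)^l$, so applying any one of the mappings does not increase the distance to $u$; in particular it strictly decreases it by an amount proportional to the displacement $d(T_ix,x)^l$. The errors in~\eqref{it:inexact} must be absorbed carefully: first I would bound $d(x_{n+1},u)$ in terms of $d(T_{\m{n}}x_n,u)$ plus $\eps_n$ via the triangle inequality, and then bound the latter using property~\eqref{eq:def.P1.maps.beta}.

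First I would show that the sequence $(d(x_n,u))$ is bounded. Since $d(T_{\m{n}}x_n,u)\leq d(x_n,u)$ by~\eqref{eq:def.P1.maps.beta}, the triangle inequality gives $d(x_{n+1},u)\leq d(x_n,u)+\eps_n$, and Lemma~\ref{lemma-conv-seq} (applied with $a_n=d(x_n,u)$) shows that $(d(x_n,u))$ converges; call the limit $L$. Let $M$ be a uniform bound on $d(x_n,u)$. Next I would set $a_n=d(x_n,u)^l$ and seek a recursion of the form $a_{n+1}\leq a_n-\beta\,d(T_{\m{n}}x_n,x_n)^l+\delta_n$, where $\sum_n\delta_n<\infty$. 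To obtain this, I would raise the displacement inequality to the $l$-th power: from $d(x_{n+1},u)\leq d(T_{\m{n}}x_n,u)+\eps_n$, and using $d(T_{\m{n}}x_n,u)^l\leq d(x_n,u)^l-\beta\,d(T_{\m{n}}x_n,x_n)^l$, I need to pass from a bound on $d(x_{n+1},u)$ to a bound on $d(x_{n+1},u)^l$. Here Lemma~\ref{lemma:Real.numbers.inequality} is the crucial device: writing $d(x_{n+1},u)\leq d(T_{\m{n}}x_n,u)+\eps_n$ and taking $l$-th powers (assuming WLOG $\eps_n\leq1$, which holds for large $n$ since $\sum\eps_n<\infty$), I get $d(x_{n+1},u)^l\leq d(T_{\m{n}}x_n,u)^l+R\,\eps_n$ with $R$ depending only on $l$ and $M$. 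Combining, $a_{n+1}\leq a_n-\beta\,d(T_{\m{n}}x_n,x_n)^l+R\,\eps_n$.

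Summing this recursion from $n=0$ to $N$, the telescoping of $a_n$ and the summability of $(\eps_n)$ yield $\beta\sum_{n=0}^\infty d(T_{\m{n}}x_n,x_n)^l\leq a_0-L^l+R\sum_{n=0}^\infty\eps_n<\infty$. In particular $d(T_{\m{n}}x_n,x_n)\to0$. Finally, to conclude asymptotic regularity I would estimate $d(x_n,x_{n+1})\leq d(x_n,T_{\m{n}}x_n)+d(T_{\m{n}}x_n,x_{n+1})\leq d(x_n,T_{\m{n}}x_n)+\eps_n$, and both terms tend to $0$.

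The main obstacle I anticipate is the exponent bookkeeping when $l\neq1$: one cannot simply add the error $\eps_n$ after taking $l$-th powers, and a naive application of convexity of $t\mapsto t^l$ would introduce a multiplicative factor rather than an additive summable term. Lemma~\ref{lemma:Real.numbers.inequality} is precisely engineered to convert the additive error at the level of distances into an additive, summable error at the level of $l$-th powers, provided the errors are eventually at most $1$ and the distances stay bounded by $M$; securing these two uniform bounds (boundedness of $(x_n)$ and $\eps_n\leq1$ eventually) is the part that requires care, but both follow from the summability of $(\eps_n)$ together with the non-expansiveness of $d(\cdot,u)$ under the $T_i$.
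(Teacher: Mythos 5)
Your proposal is correct and follows essentially the same route as the paper's proof: fix $u\in\bigcap_{i=1}^r\fix(T_i)$, use the triangle inequality together with Lemma~\ref{lemma:Real.numbers.inequality} (for eventually small errors and the uniform bound $M$) to turn the additive error into a summable additive error at the level of $l$-th powers, telescope against property~\eqref{eq:def.P1.maps.beta} to get $\sum_n d(x_n,T_{\m{n}}x_n)^l<\infty$, and conclude via $d(x_n,x_{n+1})\le d(x_n,T_{\m{n}}x_n)+\eps_n$. The only (immaterial) difference is that you telescope $d(x_n,u)^l$ while the paper telescopes $d(T_{\m{n}}x_n,u)^l$, and your summation should formally start at the index $m$ from which $\eps_n\le 1$, as you yourself note.
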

\begin{proof}
Let $u\in\bigcap_{i=1}^r\fix(T_i)$ and $m \in \N$ such that $\varepsilon_n \le 1$ for $n \ge m$. Because each $T_i$ satisfies \eqref{eq:def.P1.maps.beta} we get that for all $n\geq m + 1$,
\begin{align*}
\beta\,d(x_n,T_{\m{n}}x_n)^l & \leq d(x_n,u)^l-d(T_{\m{n}}x_n,u)^l\\
&\leq \big(d(x_n,T_{\m{n-1}}x_{n-1})+d(T_{\m{n-1}}x_{n-1},u)\big)^l-d(T_{\m{n}}x_n,u)^l\\
&\leq R\,\eps_{n-1}+d(T_{\m{n-1}}x_{n-1},u)^l-d(T_{\m{n}}x_n,u)^l\,\,\, \text{by Lemma }\ref{lemma:Real.numbers.inequality},
\end{align*}
where $R$ is a constant depending only on $l$ and $\displaystyle M \geq d(x_0,u) + \sum_{n= 0}^\infty \eps_n.$ Thus, for all $n \ge m+1$,
\begin{equation*}
\beta\sum_{k=m + 1}^n d(x_k,T_{\m{k}}x_k)^l\leq 
 d(T_{m}x_0,u)^l + R\sum_{k=0}^{\infty}\eps_k.
\end{equation*}
Since $\displaystyle \sum_{k=0}^\infty \eps_k<\infty$, we deduce that $\displaystyle \sum_{k=m+1}^\infty d(x_k,T_{\m{k}}x_k)^l<\infty$, so $\displaystyle\lim_{n \to \infty} d(x_n,T_{\m{n}}x_n)=0$. This finally yields that $(x_n)$ is asymptotically regular.\qed
\end{proof}

It is easy to see that Theorem \ref{thm-as-reg-P1} can be slightly generalized by relaxing property~\eqref{eq:def.P1.maps.beta} imposed on the mappings $T_1,\ldots,T_r$ to the condition that they satisfy the inequality used to define~\eqref{eq:def.P1.maps.beta} only for one common fixed point instead of all fixed points. However, we will not focus on such extensions because the final goal of this work is to apply the above result when the mappings are metric projections, which satisfy property~\eqref{eq:def.P1.maps.beta} in spaces of curvature bounded above.

\begin{corollary}
Let $(X,d)$ be a metric space and $T_1,\ldots,T_r:X\to X$ satisfy property~\eqref{eq:def.P1.maps.beta}. If $\bigcap_{i=1}^r\fix(T_i)\neq\emptyset,$ then the Picard iteration of $T=T_1\comp\cdots\comp T_r$ is asymptotically regular.
\end{corollary}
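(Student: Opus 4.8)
The plan is to deduce the corollary from Theorem~\ref{thm-as-reg-P1} by realizing the Picard iterates of $T$ as an evenly-spaced subsequence of an error-free instance of the cyclic method~\eqref{it:inexact}. Fix an arbitrary starting point $x\in X$; since asymptotic regularity of a mapping means asymptotic regularity at every point, it suffices to show $d(T^nx,T^{n+1}x)\to 0$. The key observation is that one full sweep of the cyclic iteration composes all $r$ mappings exactly once, so consecutive Picard iterates of $T$ are precisely the terms $x_0,x_r,x_{2r},\dots$ of such a cyclic sequence.

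To get the composition order right I would first relabel the family. Setting $U_i:=T_{r+1-i}$ for $1\le i\le r$, the mappings $U_1,\dots,U_r$ still satisfy property~\eqref{eq:def.P1.maps.beta} and share the common fixed point, so Theorem~\ref{thm-as-reg-P1} applies to them. Define $(x_n)$ by $x_0=x$ and $x_{n+1}=U_{\m{n}}x_n$; this is exactly~\eqref{it:inexact} for the family $(U_i)$ with the (trivially summable) error sequence $\eps_n\equiv 0$. Using the periodicity of $\m{\cdot}$, a one-line induction shows that a single cycle gives $U_r\comp\cdots\comp U_1=T_1\comp\cdots\comp T_r=T$, whence $x_{kr}=T^kx$ for every $k\in\N$. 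By Theorem~\ref{thm-as-reg-P1} the sequence $(x_n)$ is asymptotically regular, i.e.\ $d(x_n,x_{n+1})\to 0$.

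The final step is a finite-sum triangle inequality argument. For each $k$,
\[
d(T^kx,T^{k+1}x)=d(x_{kr},x_{(k+1)r})\le\sum_{j=0}^{r-1}d(x_{kr+j},x_{kr+j+1}).
\]
For each fixed $j$ the sequence $\big(d(x_{kr+j},x_{kr+j+1})\big)_k$ is a subsequence of the null sequence $\big(d(x_n,x_{n+1})\big)$ and so tends to $0$ as $k\to\infty$; since there are only $r$ terms, the right-hand side tends to $0$. Hence $d(T^kx,T^{k+1}x)\to 0$, and as $x$ was arbitrary the Picard iteration of $T$ is asymptotically regular. I do not expect any genuine obstacle here: the substantive work is already carried out in Theorem~\ref{thm-as-reg-P1}, and the only points requiring care are the bookkeeping of the composition order (handled by the relabeling $U_i=T_{r+1-i}$) and the observation that passing from full asymptotic regularity of $(x_n)$ to that of the period-$r$ subsequence costs nothing because $r$ is finite.
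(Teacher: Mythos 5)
Your proof is correct and follows essentially the same route as the paper: realize the Picard iterates of $T$ as the subsequence $(x_{kr})$ of the error-free cyclic iteration \eqref{it:inexact}, invoke Theorem~\ref{thm-as-reg-P1}, and finish with a triangle inequality over the $r$ intermediate steps. The only difference is your explicit relabeling $U_i=T_{r+1-i}$ to get the composition order right, a bookkeeping point the paper leaves implicit.
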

\begin{proof}
Note that, for $x_0 \in X$ and $k \in \mathbb{N}$, $T^k x_0 = x_{kr}$, where $(x_n)$ is defined by (\ref{it:inexact}) with $\varepsilon_n = 0$ for every $n \in \N$. Hence, for all $k\in\N$,
$$
d(T^k x_0,T^{k+1} x_0) = d(x_{kr},x_{(k+1)r}) \le d(x_{kr},x_{kr+1}) + \ldots + d(x_{kr+r-1},x_{kr+r}),
$$
which implies, by Theorem \ref{thm-as-reg-P1}, that $T$ is asymptotically regular.\qed
\end{proof}

As a consequence, we have that the sequence obtained by the cyclic projection method for a finite number of sets is asymptotically regular in spaces of curvature bounded above.

\begin{corollary} \label{cor-proj-as-ref}
Let $\kappa \in \mathbb{R}$. Suppose $X$ is a CAT$(\kappa)$ space (with ${\rm diam}(X) < \pi/(2\sqrt{\kappa})$ for $\kappa > 0$). Let $C_1, \ldots, C_r \subseteq  X$ be nonempty closed and convex sets such that $\bigcap_{i=1}^r C_i \ne \emptyset$. Given any starting point $x_0 \in X$, let $(x_n)$ be defined by (\ref{it:inexact}) with $T_i = P_{C_i}$, for $i = 1, \ldots, r.$ Then the sequence $(x_n)$ and the mapping $P_{C_1} \comp \ldots \comp P_{C_r}$ are asymptotically regular.
\end{corollary}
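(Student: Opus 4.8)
The plan is to recognize the metric projections $P_{C_i}$ as a special case of the mappings treated in Theorem~\ref{thm-as-reg-P1} and the corollary following it, so that the asymptotic regularity of both $(x_n)$ and $P_{C_1}\comp\cdots\comp P_{C_r}$ follows immediately once two hypotheses are checked: that the family has a common fixed point, and that each $P_{C_i}$ satisfies property~\eqref{eq:def.P1.maps.beta}.

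First I would note that, since each $C_i$ is closed and convex, $P_{C_i}\colon X\to C_i$ is well-defined and singlevalued (as recalled in the preliminaries, using completeness of the space), and that $\fix(P_{C_i})=C_i$, because $P_{C_i}(x)=x$ exactly when $x\in C_i$. Hence
\[
\bigcap_{i=1}^r\fix(P_{C_i})=\bigcap_{i=1}^r C_i\neq\emptyset,
\]
so the common fixed point assumption of Theorem~\ref{thm-as-reg-P1} is satisfied.

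Next I would verify property~\eqref{eq:def.P1.maps.beta} for each $P_{C_i}$, separating according to the sign of $\kappa$. For $\kappa>0$ the hypothesis $\diam(X)<\pi/(2\sqrt{\kappa})$ renders $X$ a $2$-uniformly convex space, with parameter $c=(\pi-2\sqrt{\kappa}\,\eps)\tan(\sqrt{\kappa}\,\eps)$ for suitable $\eps$, and the computation carried out for the metric projection in the preliminaries shows that $P_{C_i}$ satisfies~\eqref{eq:def.P1.maps.beta} with $l=2$ and $\beta=c/2$. For $\kappa\le 0$ a $\CAT(\kappa)$ space is in particular a $\CAT(0)$ space, hence $2$-uniformly convex with parameter $c=2$; in this case $P_{C_i}$ is firmly nonexpansive, and since $\fix(P_{C_i})\neq\emptyset$ it satisfies~\eqref{eq:def.P1.maps.beta} (with $l=2$) by the general observation recorded earlier for firmly nonexpansive mappings with fixed points.

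With both hypotheses established, I would apply Theorem~\ref{thm-as-reg-P1} with $T_i=P_{C_i}$ to conclude that $(x_n)$ is asymptotically regular, and the corollary following that theorem to conclude that $P_{C_1}\comp\cdots\comp P_{C_r}$ is asymptotically regular. No genuinely new argument is required: the only substantive work, namely the uniform convexity estimate giving~\eqref{eq:def.P1.maps.beta} for projections and the reduction of the case $\kappa<0$ to $\CAT(0)$, has already been carried out in the preliminaries. Thus the main (and quite mild) obstacle is merely organizing the case distinction on $\kappa$ and confirming that the projections meet the standing completeness and diameter assumptions.
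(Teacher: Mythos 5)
Your proposal is correct and coincides with the paper's (implicit) argument: the paper states this corollary as an immediate consequence of Theorem~\ref{thm-as-reg-P1} and the corollary following it, relying precisely on the verification in the preliminaries that metric projections onto closed convex sets in these spaces satisfy property~\eqref{eq:def.P1.maps.beta} with $l=2$ and $\beta=c/2$, together with $\fix(P_{C_i})=C_i$. Your explicit case distinction on the sign of $\kappa$ only spells out what the paper's preliminary discussion already covers uniformly.
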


We give next a rate of asymptotic regularity for the sequence $(x_n)$ generated by (\ref{it:inexact}) with $\varepsilon_n = 0$ for every $n \in \N$. For simplicity, we prove the theorem for two mappings, but the proof method works equally well in the general case. The result extends \cite[Theorem 5.2]{Nic13} where a quadratic rate of asymptotic regularity in $1/\varepsilon$ was given for the sequence generated by the alternating projection method for two sets in the context of CAT$(0)$ spaces. Since its proof relies on particular properties of the metric projection, the obtained rate of asymptotic regularity is slightly better than the one we give here when restricting to the setting of CAT$(0)$ spaces. However, our result holds not only in a broader framework, but also for more general mappings, namely firmly nonexpansive ones.

\begin{theorem} \label{thm-as-reg-fn-quant}
Let $(X,d)$ be a $p$-uniformly convex space with parameter $c$ and suppose that $T_1,T_2:X\to X$ are firmly nonexpansive mappings with $F:=\fix{T_1}\cap\fix{T_2}\neq\emptyset.$ Consider $x_0\in X$, $u\in F$ and $b>0$ such that $d(x_0,u)\leq b$. Suppose that $(x_n)$ is defined by~\eqref{it:inexact} with $\varepsilon_n = 0$ for every $n \in \N$. Then for all $\eps>0$ and $n\geq\theta(\eps,b,c,p)$, we have that $d(x_n,x_{n+1})\leq\eps$, where
$$
\theta(\eps,b,c,p):=\left\{
\begin{array}{ll}
\displaystyle2\,\left[\frac{2}{c}\left(\frac{4p\,b^p}{c\,\eps^p}\right)^p\right] & \mbox{if }\eps<2b,\\
0 & \mbox{otherwise}.
\end{array}
\right.
$$
Moreover, for all $\eps>0$, $n\geq\tilde\theta(\eps,b,c,p)$ and $i \in \{1,2\}$, we have that $d(x_n,T_i x_n)\leq\eps$, where
$$
\tilde\theta(\eps,b,c,p):=1+\theta(\eps,b,c,p).
$$
\end{theorem}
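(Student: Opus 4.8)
The plan is to derive everything from property~\eqref{eq:def.P1.maps.beta}, which every firmly nonexpansive mapping satisfies in a $p$-uniformly convex space with $l=p$ and $\beta=c/2$, together with the fact that firm nonexpansivity forces nonexpansivity (set $\lambda=0$ in~\eqref{def-lambda-fe}). Fixing $u\in F$ and writing $\rho_n:=d(x_n,u)$, $g_n:=d(x_n,x_{n+1})$, I would first record the energy inequality: since $x_{n+1}=T_{\m n}x_n$ and $u\in\fix(T_{\m n})$,
\[
\rho_{n+1}^p\le\rho_n^p-\frac{c}{2}\,g_n^p .
\]
Thus $(\rho_n)$ is non-increasing with $\rho_n\le b$, and telescoping gives $\sum_{n=0}^\infty g_n^p\le\frac{2}{c}\,\rho_0^p\le\frac{2}{c}\,b^p$. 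This single summability bound drives the whole estimate and already settles the regime $\eps\ge 2b$, where $g_n\le\rho_n+\rho_{n+1}\le 2b\le\eps$ for all $n$, so $\theta=0$.

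For the substantive case $\eps<2b$ I would introduce two auxiliary quantities. The two-step displacement $e_n:=d(x_n,x_{n+2})$ is non-increasing: since $T_{\m{n+1}}=T_{\m{n-1}}$ and $x_n=T_{\m{n-1}}x_{n-1}$, nonexpansivity yields $e_n=d(T_{\m{n+1}}x_{n-1},T_{\m{n+1}}x_{n+1})\le d(x_{n-1},x_{n+1})=e_{n-1}$ (equivalently, the even and odd subsequences are Picard orbits of the nonexpansive maps $T_2\comp T_1$ and $T_1\comp T_2$). Since $e_n^p\le 2^{p-1}(g_n^p+g_{n+1}^p)$, monotonicity of $e_n$ together with the energy bound gives $(n+1)\,e_n^p\le\frac{2^{p+1}}{c}\,b^p$, i.e.\ a rate of order $\eps^{-p}$ for $e_n\to 0$. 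Next, single steps are quasi-monotone two indices apart: because $g_n=d(x_n,T_{\m n}x_n)$ and $g_{n+2}=d(x_{n+2},T_{\m n}x_{n+2})$ use the same map $T_{\m n}$, the triangle inequality and nonexpansivity give $g_{n+2}\le g_n+2e_n$.

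I would then combine these. The energy bound forces a small single step in every window (some $g_m\le(2b^p/(cW))^{1/p}$ occurs in any block of length $W$), and the relation $g_{n+2}\le g_n+2e_n$, fed by the controlled decay of $e_n$, lets me propagate such smallness forward along each parity to a uniform threshold $g_n\le\eps$ for $n\ge\theta(\eps,b,c,p)$; nesting the $\eps^{-p}$ rate of $e_n$ inside this propagation is what produces the stated order $\eps^{-p^2}$ and the factor $4p$. This last extraction is the crux, and the reason the rate is $\eps^{-p^2}$ rather than the $\eps^{-p}$ one might hope for: the single-step displacement is genuinely not monotone --- if $x_n\in\fix(T_{\m n})\setminus F$ then $g_n=0<g_{n+1}$ --- so one cannot invoke $N\,g_N^p\le\sum_{k<N}g_k^p$ directly, and must instead convert ``finite energy plus quasi-monotonicity with a vanishing error'' into an honest bound valid for all large $n$, which costs a power of $p$.

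Finally, the assertion for $d(x_n,T_ix_n)$ follows from the one for $g_n$ by a shift of one index, which explains $\tilde\theta=1+\theta$. For $n\ge\tilde\theta$ and $i=\m n$ one has $T_ix_n=x_{n+1}$, so $d(x_n,T_ix_n)=g_n\le\eps$; for the remaining index $j\ne\m n$, namely $j=\m{n-1}$, one has $x_n=T_jx_{n-1}$, whence nonexpansivity gives $d(x_n,T_jx_n)=d(T_jx_{n-1},T_jx_n)\le d(x_{n-1},x_n)=g_{n-1}\le\eps$, the estimate $g_{n-1}\le\eps$ being available exactly because $n-1\ge\theta$.
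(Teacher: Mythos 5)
Your skeleton overlaps substantially with the paper's proof: the energy inequality $d(x_{n+1},u)^p\le d(x_n,u)^p-\frac{c}{2}\,d(x_n,x_{n+1})^p$, the resulting summability $\sum_n d(x_n,x_{n+1})^p\le \frac{2}{c}b^p$ and the pigeonhole step, the disposal of the case $\eps\ge 2b$, the monotonicity of the two-step displacement $e_n=d(x_n,x_{n+2})$ (the even/odd subsequences being Picard orbits of $T_2\comp T_1$ and $T_1\comp T_2$), and the reduction of $d(x_n,T_ix_n)$ to $\max\{d(x_{n-1},x_n),d(x_n,x_{n+1})\}$ giving $\tilde\theta=1+\theta$ are all correct and are exactly what the paper does. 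But the step you yourself label ``the crux'' is genuinely missing, and the mechanism you propose for it does not work. Propagating smallness forward via $g_{n+2}\le g_n+2e_n$ forces you to sum the increments $2e_{m+2j}$ over every step between the good index $m$ and the index $n$ you want to control; you only know $e_n=O(n^{-1/p})$, so $\sum_n e_n$ diverges for $p>1$ and the accumulated error is unbounded. If instead you confine the propagation to a window just long enough to contain a step with $g_m\le\eps/2$, that window must have length $W$ of order $b^p/(c\eps^p)$, and making the accumulated error $2W e_{n-2W}$ at most $\eps/2$ then forces $n$ of order $W^p(b/\eps)^p/c\sim \eps^{-(p^2+p)}$ --- strictly worse than the claimed $\eps^{-p^2}$, and with no visible source for the constant $4p$.

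The ingredient you are missing is the paper's application of the mean value theorem to the telescoping Fej\'er quantity. Since $d(x_{2j+2},u)\le d(x_{2j},u)\le b$, one has
\[
\frac{c}{2}\left(d(x_{2j},x_{2j+1})^p+d(x_{2j+1},x_{2j+2})^p\right)\le d(x_{2j},u)^p-d(x_{2j+2},u)^p\le p\,b^{p-1}\left(d(x_{2j},u)-d(x_{2j+2},u)\right)\le p\,b^{p-1}\,e_{2j}.
\]
This converts smallness of the two-step displacement \emph{directly} into smallness of both adjacent single steps, with no forward propagation and no summation: once pigeonhole produces one index $i\le N$ with $e_{2i}\le 2\alpha$, where $\alpha=c\eps^p/(4pb^{p-1})$ and $N=\left[2b^p/(c\alpha^p)\right]$, monotonicity of $(e_{2j})$ gives $e_{2j}\le 2\alpha$ for all $j\ge i$, and the displayed inequality yields $d(x_n,x_{n+1})\le\eps$ for all $n\ge 2N$. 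This single use of $a^p-b^p\le p\,b^{p-1}_{\max}(a-b)$ is precisely where the factor $p$ (hence $4p$) and the exponent $p^2$ come from; without it your argument does not close, and with your proposed replacement it would not reach the stated bound.
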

\begin{proof}
Since for all $n\in\N$, $d(x_n,x_{n+1})\leq d(x_n,u)+d(u,x_{n+1})\leq2b,$ we can assume that $\eps<2b$. Denote
\[\alpha: = \frac{c\,\eps^p}{4p\,b^{p-1}}.\]
Take $N:=\left[2b^p/(c\alpha^p)\right]$. For each $i\in\N$,
\begin{align*}
d(x_{2i+2},u)^p &\leq d(x_{2i+1},u)^p-\frac{c}{2}\,d(x_{2i+1},x_{2i+2})^p  \\
&\leq d(x_{2i},u)^p-\frac{c}{2}\left(d(x_{2i},x_{2i+1})^p + d(x_{2i+1},x_{2i+2})^p\right),
\end{align*}
from where
\begin{equation}\label{eq:proof.rate.RA.0}
\frac{c}{2}\left(d(x_{2i},x_{2i+1})^p + d(x_{2i+1},x_{2i+2})^p\right)\leq d(x_{2i},u)^p-d(x_{2i+2},u)^p.
\end{equation}
Hence, we have that
\begin{equation}\label{eq:proof.rate.RA.1}
\frac{c}{2}\sum_{i=0}^N\left(d(x_{2i},x_{2i+1})^p+d(x_{2i+1},x_{2i+2})^p\right) \leq d(x_0,u)^p-d(x_{2N+2},u)^p \leq b^p.
\end{equation}
Assume that for all $i=0,\ldots,N$, $d(x_{2i},x_{2i+1})^p+d(x_{2i+1},x_{2i+2})^p >\alpha^p$.
Then, using~\eqref{eq:proof.rate.RA.1}, we deduce that $N+1<2b^p/(c\alpha^p),$ a contradiction. This means that there exists $i\in\{0,\ldots,N\}$ such that $d(x_{2i},x_{2i+1})^p+d(x_{2i+1},x_{2i+2})^p\leq\alpha^p.$ Hence,
$d(x_{2i},x_{2i+1})\leq\alpha$ and $d(x_{2i+1},x_{2i+2})\leq\alpha,$ from where $d(x_{2i},x_{2i+2})\leq 2\alpha$. This implies that for all $j\geq i,$ $d(x_{2j},x_{2j+2})\leq 2\alpha.$ By~\eqref{eq:proof.rate.RA.0} and the mean value theorem, we have that for all $j\geq i,$
\begin{align*}
\frac{c}{2}\left(d(x_{2j},x_{2j+1})^p+d(x_{2j+1},x_{2j+2})^p\right)
&\leq d(x_{2j},u)^p-d(x_{2j+2},u)^p \\
&\leq p\,b^{p-1}\,\left(d(x_{2j},u)-d(x_{2j+2},u)\right) \\
&\leq p\,b^{p-1}\,d(x_{2j},x_{2j+2})\leq p\,b^{p-1}\,2\alpha=\frac{c}{2}\eps^p,
\end{align*}
that is, $d(x_{2j},x_{2j+1})^p+d(x_{2j+1},x_{2j+2})^p\leq\eps^p$. Hence, for all $j\geq i$, $d(x_{2j},x_{2j+1})\leq\eps$ and $d(x_{2j+1},x_{2j+2})\leq\eps$. Thus, for all $n\geq 2N$, $d(x_n,x_{n+1})\leq\eps.$

It is easy to see that for every $n \ge 1$ and $i \in \{1,2\}$, 
\[d(x_n,T_i x_n) \le \max\{d(x_n,x_{n+1}),d(x_{n-1},x_n)\}.\] 
Thus, $\tilde\theta$ is a rate of convergence for $(d(x_n,T_i x_n))$ towards $0$.\qed
\end{proof}

\begin{remark}
The above result also holds if we consider a finite family of firmly nonexpansive mappings $T_1,\cdots,T_r$ with $\bigcap_{i=1}^r \fix{T_i} \neq\emptyset.$ In this case, a rate of asymptotic regularity for $(x_n)$, defined by~\eqref{it:inexact} with $\varepsilon_n = 0$ for every $n \in \N$, is
$$
\theta(\eps,b,c,p,r):=\left\{
\begin{array}{ll}
\displaystyle r\,\left[\frac{2}{c}\left(\frac{2\,r\,p\,b^p}{c\,\eps^p}\right)^p\right] & \mbox{if }\eps<2b,\\
0 & \mbox{otherwise}.
\end{array}
\right.
$$
Moreover, for every $i \in \{1,\ldots,r\}$, 
\[\tilde\theta(\eps,b,c,p,r) : = \left[ r/2 \right]  + \theta\left(\frac{\eps}{2 \lceil r/2 \rceil-1},b,c,p\right)\] 
is a rate of convergence for $(d(x_n,T_i x_n))$ towards $0$.
\end{remark}

We prove in the following an asymptotic regularity result for two mappings that satisfy \eqref{eq:def.P2.maps} and for which the fixed point set of the composition is nonempty. To simplify the writing, we consider the sequences $(x_n)$ and $(y_n)$ defined by
\begin{equation*}\label{it:two}\tag{$A_2$}
d(y_n,T_1 x_n) \le \varepsilon_n \quad \text{and} \quad d(x_{n+1},T_2 y_n) \le \delta_n,\quad\text{for each }n\in \N,
\end{equation*}
where $\displaystyle \sum_{n = 0}^\infty\varepsilon_n < \infty$ and $\displaystyle\sum_{n =0}^\infty\delta_n < \infty$.

\begin{theorem} \label{thm-as-reg-P2}
Let $(X,d)$ be a CAT$(0)$ space and let $T_1,T_2:X\to X$ satisfy property~\eqref{eq:def.P2.maps}. If $\fix(T_2\comp T_1)\neq\emptyset$, then given any $x_0 \in X$, the sequences $(x_n)$ and $(y_n)$ defined by~\eqref{it:two} are asymptotically regular.
\end{theorem}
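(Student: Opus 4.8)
The plan is to fix $p\in\fix(T_2\comp T_1)$ and set $q=T_1p$, so that $T_2q=p$; write $g=d(p,q)$ for the ``gap''. I regard $(x_n)$ as an inexact orbit of $T_2\comp T_1$ passing through the intermediate points $y_n$. Since in a $\CAT(0)$ space every mapping satisfying \eqref{eq:def.P2.maps} is nonexpansive, I first record the contraction estimates $d(y_n,q)\le d(x_n,p)+\eps_n$ and $d(x_{n+1},p)\le d(y_n,q)+\delta_n$, obtained by applying nonexpansiveness of $T_1$ to the pair $(x_n,p)$ and of $T_2$ to the pair $(y_n,q)$ and absorbing the errors. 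Writing $a_n=d(x_n,p)$ and $b_n=d(y_n,q)$, this gives $a_{n+1}\le b_n+\delta_n\le a_n+\eps_n+\delta_n$; as $\sum(\eps_n+\delta_n)<\infty$, Lemma~\ref{lemma-conv-seq} shows that $(a_n)$ and $(b_n)$ converge, and the interlacing of the inequalities $a_{n+1}\le b_n+\delta_n$ and $b_n\le a_n+\eps_n$ forces a common limit $L$.

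The heart of the argument is to extract from \eqref{eq:def.P2.maps} a quantitative control of how far the two displacement ``vectors'' $x_n\to y_n$ and $y_n\to x_{n+1}$ deviate from the fixed pair $(p,q)$. Applying \eqref{eq:def.P2.maps} for $T_1$ to $(x_n,p)$ (so $T_1x_n\approx y_n$, $T_1p=q$) and for $T_2$ to $(y_n,q)$ (so $T_2y_n\approx x_{n+1}$, $T_2q=p$) and rearranging, I obtain, up to summable error terms,
\[
\Lambda_n\le a_n^2-b_n^2,\qquad \Lambda_n'\le b_n^2-a_{n+1}^2 .
\]
Here $\Lambda_n:=a_n^2+b_n^2+d(x_n,y_n)^2+g^2-d(x_n,q)^2-d(y_n,p)^2$ is the $\CAT(0)$ surrogate for the squared difference $\|(x_n-y_n)-(p-q)\|^2$ of displacement vectors, and $\Lambda_n'$ is its analogue built from $y_n,x_{n+1},q,p$ for the $T_2$-step. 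The four-point inequality \eqref{eq:CAT(0).four.points} guarantees $\Lambda_n,\Lambda_n'\ge 0$. Telescoping the right-hand sides against the convergent sequences $a_n^2,b_n^2$ (again using Lemma~\ref{lemma-conv-seq}, and Lemma~\ref{lemma:Real.numbers.inequality} to linearize the error contributions) yields $\sum_n(\Lambda_n+\Lambda_n')<\infty$, hence $\Lambda_n\to0$ and $\Lambda_n'\to0$: the step $x_n\to y_n$ becomes asymptotically a copy of $p\to q$, and the step $y_n\to x_{n+1}$ a copy of $q\to p$.

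Finally I fold the two steps. Since the incoming direction $y_n\to x_n$ and the outgoing direction $y_n\to x_{n+1}$ both align asymptotically with $q\to p$, the chord $d(x_n,x_{n+1})$ must collapse; concretely I will apply the Reshetnyak inequality \eqref{eq:CAT(0).four.points} to a quadruple built from $x_n,x_{n+1},p,q$ in order to bound $d(x_n,x_{n+1})^2$ by a multiple of $\Lambda_n+\Lambda_n'$ plus summable errors, whence $d(x_n,x_{n+1})\to0$. Asymptotic regularity of $(y_n)$ is then immediate, since nonexpansiveness of $T_1$ gives $d(y_n,y_{n+1})\le d(x_n,x_{n+1})+(\eps_n+\eps_{n+1})$, which also tends to $0$.

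The main obstacle is precisely this last folding step. In the absence of a linear structure one cannot simply add the two displacement vectors, so the passage from ``both displacement vectors are asymptotically $\pm$ the segment $p\to q$'' to ``$d(x_n,x_{n+1})\to0$'' must be carried out entirely through \eqref{eq:CAT(0).four.points}; the delicate point is choosing the quadruple so that the cross terms $d(x_n,q)^2$ and $d(y_n,p)^2$ cancel against $\Lambda_n$ and $\Lambda_n'$. By contrast, the nonexpansiveness estimates, the convergence of $a_n,b_n$, and the uniform treatment of the errors $\eps_n,\delta_n$ via Lemmas~\ref{lemma-conv-seq} and~\ref{lemma:Real.numbers.inequality} are routine once the two inequalities above are in place.
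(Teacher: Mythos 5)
Your preparatory work is sound: mappings with property \eqref{eq:def.P2.maps} are indeed nonexpansive in CAT$(0)$ spaces, the Fej\'er-type estimates give $a_n,b_n\to L$, the bound $0\le\Lambda_n\le a_n^2-b_n^2$ (up to summable errors) does follow from \eqref{eq:def.P2.maps} applied to the pair $(x_n,p)$ together with \eqref{eq:CAT(0).four.points} applied to $(x_n,q,y_n,p)$ --- one even gets $\Lambda_n\ge\left(d(x_n,y_n)-g\right)^2$ --- and the telescoping yields $\Lambda_n,\Lambda_n'\to0$. The gap is exactly where you place it, in the final ``folding'' step, and it is not just a missing computation: the implication you intend to prove there is false. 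In a CAT$(0)$ space the vanishing of $\Lambda_n$ and $\Lambda_n'$ does not control $d(x_n,x_{n+1})$, because the two ``parallel transports'' of the segment from $q$ to $p$ based at $y_n$ need not coincide. Concretely, let $X$ be a tripod (an $\R$-tree) with branch point $o$; put $y_n=o$, put $q$ and $p$ on the first leg at distances $L$ and $L-g$ from $o$ (so $d(p,q)=g$, with $0<g<L$), and put $x_n$ and $x_{n+1}$ on the second and third legs, each at distance $g$ from $o$. Then $a_n=b_n=a_{n+1}=L$, $d(x_n,y_n)=d(y_n,x_{n+1})=g$, $d(x_n,q)=d(x_{n+1},q)=L+g$, $d(y_n,p)=L-g$, and a direct check gives $\Lambda_n=\Lambda_n'=0$, while $d(x_n,x_{n+1})=2g>0$. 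Hence no inequality of the form $d(x_n,x_{n+1})^2\le C(\Lambda_n+\Lambda_n')+(\text{summable errors})$ can hold, and no choice of quadruple in \eqref{eq:CAT(0).four.points} will produce one; the additivity $x_n-x_{n+1}=(x_n-y_n)+(y_n-x_{n+1})$ that finishes the Hilbert-space argument has no metric substitute obtainable from these quantities alone.

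To close the argument you must apply \eqref{eq:def.P2.maps} to pairs of iterates, not only to pairs consisting of an iterate and the fixed point. This is what the paper's proof does, and its structure is genuinely different from yours: writing $S=T_2\comp T_1$, it first shows that $\left(d(Sx_{n+k},Sx_n)\right)_n$ converges to some $\xi_k$, then applies \eqref{eq:def.P2.maps} for $T_1$ at $(Sx_{n+k},Sx_n)$ and for $T_2$ at $(T_1x_{n+k},T_1x_n)$, adds the two inequalities and uses \eqref{eq:CAT(0).four.points} to derive $\xi_{k+1}=\xi_k+\xi_1$ by induction. If $\xi_1>0$ this forces $(Sx_n)$ to be unbounded, contradicting the bound $d(Sx_n,u)\le d(x_0,u)+\sum_n(\eps_n+\delta_n)$ coming from the fixed point $u$; hence $\xi_1=0$ and asymptotic regularity follows. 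So the fixed point enters only to guarantee boundedness, whereas in your scheme it carries the whole telescoping; you would need to graft the paper's pairwise use of \eqref{eq:def.P2.maps} onto your setup to repair the last step.
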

\begin{proof}
Denote $S=T_2\comp T_1$. Then, for every $n \in \mathbb{N}$,
\begin{equation}\label{thm-as-reg-P2-eq1}
d(x_{n+1},Sx_n)  \leq d(x_{n+1},T_2 y_n)+d(T_2 y_n,T_2(T_1x_{n})) \le \delta_n + \varepsilon_n.
\end{equation}
Hence,
\begin{equation}\label{thm-as-reg-eq2}
\lim_{n \to \infty}d(x_{n+1},Sx_{n})=0.
\end{equation}
We also have that for every $n, k \in \mathbb{N}$,
\begin{align*}
& d(Sx_{n+1+k},Sx_{n+1}) \leq d(T_1x_{n+1+k},T_1x_{n+1}) \\
&\quad \leq d(T_1x_{n+1+k},T_1Sx_{n+k}) +d(T_1Sx_{n+k},T_1Sx_{n}) + d(T_1Sx_{n},T_1x_{n+1} )\\
&\quad \leq d(x_{n+1+k},Sx_{n+k}) + d(Sx_{n+k},Sx_{n})+d(Sx_{n},x_{n+1}) \\
& \quad \leq \delta_{n+k}+\eps_{n+k} +\delta_{n}+\eps_{n} + d(Sx_{n+k},Sx_{n}) \quad \text{by } (\ref{thm-as-reg-P2-eq1}).
\end{align*}
This yields, by Lemma \ref{lemma-conv-seq}, that $\left(d(Sx_{n+k},Sx_{n})\right)_{n}$ converges to some $\xi_k \ge 0$.  Furthermore, we have that $\displaystyle \lim_{n \to \infty}d(T_1Sx_{n+k},T_1Sx_{n})=\xi_k$.

Suppose $\xi_1>0$. We prove by induction that $\xi_k=k\,\xi_1$ for all $k\geq1$. For $k=1$ this is clear. Suppose the claim holds true for all $i=1,\ldots,k$. We show that it also holds for $k+1$. Since $T_1$ satisfies \eqref{eq:def.P2.maps}, we have that
\begin{align*}
2d(T_1Sx_{n+k},T_1Sx_{n})^2 &\leq d(Sx_{n+k},T_1Sx_{n})^2+d(Sx_{n},T_1Sx_{n+k})^2 \\
& \quad - d(Sx_{n+k},T_1Sx_{n+k})^2 - d(Sx_{n},T_1Sx_{n})^2.
\end{align*}
Likewise,
\begin{align*}
2d(Sx_{n+k},Sx_{n})^2 & =2d(T_2(T_1x_{n+k}),T_2(T_1x_{n}))^2 \\
& \leq d(T_1x_{n+k},Sx_n)^2+d(T_1x_n,Sx_{n+k})^2 \\
&\qquad - d(T_1x_{n+k},Sx_{n+k})^2 - d(T_1x_n,Sx_n)^2.
\end{align*}
Adding the two above inequalities and using (\ref{eq:CAT(0).four.points}) we obtain that
\begin{align*}
& 2d(T_1Sx_{n+k},T_1Sx_n)^2 + 2d(Sx_{n+k},Sx_n)^2 \\
& \quad \le 2d(Sx_n,Sx_{n+k})d(T_1Sx_n,T_1x_{n+k}) +2d(Sx_n,Sx_{n+k})d(T_1x_n,T_1Sx_{n+k}).
\end{align*}
Dividing by $2d(Sx_{n},Sx_{n+k})$, using the nonexpansivity of $T_1$, the triangle inequality and the notation $\gamma_n^k=d(T_1Sx_{n},T_1Sx_{n+k})^2/d(Sx_n,Sx_{n+k})$,
\begin{align*}
& \gamma_n^k + d(Sx_{n+k},Sx_n) \le d(Sx_n,x_{n+k}) + d(x_n,Sx_{n+k}) \\
& \quad \le d(Sx_n,Sx_{n+k-1})+d(Sx_{n+k-1},x_{n+k}) + d(x_n,Sx_{n-1})+d(Sx_{n-1},Sx_{n+k}).
\end{align*}
Thus,
\begin{align*}
& \gamma_n^k + d(Sx_{n+k},Sx_n) - d(Sx_n,Sx_{n+k-1}) - d(x_n,Sx_{n-1}) - d(Sx_{n+k-1},x_{n+k})\\
& \quad \le d(Sx_{n-1},Sx_{n+k}) \le d(Sx_{n-1},Sx_{n}) +  d(Sx_{n},Sx_{n+k}).
\end{align*}
Letting $n \to \infty$ above and using the induction hypothesis and (\ref{thm-as-reg-eq2}), we get that
$$\xi_{k+1} = \lim_{n \to \infty} d(Sx_n,Sx_{n+k+1}) = (k+1)\xi_1,$$
which finishes the induction. Thus, $\left(Sx_{n}\right)$ is unbounded. Let $u\in\fix(T_2\comp T_1)$. Since, by (\ref{thm-as-reg-P2-eq1}), for all $n \in \mathbb{N}$, 
\[d(Sx_{n+1},u) \leq d(x_{n+1},u) \leq d(x_{n+1},Sx_n)+d(Sx_n,u) \leq \delta_n +\eps_n +d(Sx_{n},u),\]
it follows that $\displaystyle d(Sx_{n+1},u) \le \sum_{n=0}^\infty \delta_n + \sum_{n=0}^\infty \eps_n + d(x_0,u)$, which shows that $\left(Sx_n\right)$ is bounded, a contradiction.

Therefore, $\xi_1=0$, that is, $\displaystyle \lim_{n \to \infty}d(Sx_{n+1},Sx_n) = 0$ which yields, using~\eqref{thm-as-reg-eq2}, that $\displaystyle \lim_{n \to \infty}d(x_{n+1},x_n) = 0$ because
\[d(x_{n+1},x_n)\leq d(x_{n+1},Sx_n)+d(Sx_n,Sx_{n-1})+d(Sx_{n-1},x_{n}).\]
At the same time,
\begin{align*}
d(y_{n+1},y_n) & \leq d(y_{n+1},T_1 x_{n+1}) + d(T_1 x_{n+1}, T_1 x_n) + d(T_1 x_n, y_n)\\
& \le \eps_{n+1} + \eps_n + d(x_{n+1},x_n),
\end{align*}
from where $\displaystyle \lim_{n \to \infty}d(y_{n+1},y_n) = 0$.\qed
\end{proof}

\begin{remark} \label{rmk-as-reg-P2-bound}
Note that in the proof of Theorem \ref{thm-as-reg-P2} one can replace the condition that $\text{Fix}(T_2 \comp T_1) \ne \emptyset$ by the assumption that one of the sequences $(x_n)$ or $(y_n)$ is bounded (and so the other sequence will be bounded too). We show in the next section (see Theorem \ref{thm-conv-P2}) that these two conditions are in fact equivalent.
\end{remark}

We apply in the sequel the above result to the study of a minimization problem which was considered in Hilbert spaces in \cite{AckPre80, BauComRei05} and was recently taken up in \cite{Ban14} in the setting of CAT$(0)$ spaces. Let $(X,d)$ be a CAT$(0)$ space and $f,g : X \to ]-\infty,+\infty]$ convex, lower semi-continuous and proper. Let $\Phi : X \times X \to ]-\infty,+\infty]$,
\[\Phi(x,y) := f(x) + g(y) + \frac{1}{2\lambda}d(x,y)^2,\]
where $\lambda > 0$. We focus on the problem
\begin{equation}\label{def-fct-min}
\argmin_{(x,y)\in X \times X}  \Phi(x,y).
\end{equation}
Denote by $S \subseteq X \times X$ the set of its solutions. One way to approach this problem is to apply alternatively the two resolvents corresponding to the functions $f$ and $g$. In this process one can also allow errors when computing the values of the resolvents. Thus, given a starting point $x_0 \in X$, one can construct the sequences $(x_n)$ and $(y_n)$ defined by (\ref{it:two}), when considering $T_1 = J_\lambda^g$ and $T_2=J_\lambda^f$. Note that if $(x^*,y^*) \in S$, then $y^* = J_\lambda^g(x^*)$ and $x^* = J_\lambda^f(y^*)$, so $\text{Fix}(J_\lambda^f \circ J_\lambda^g) \ne \emptyset$. At the same time, if $x^* \in \text{Fix}(J_\lambda^f \circ J_\lambda^g)$, then $(x^*, J_\lambda^g(x^*)) \in S$. To see this, denote $y^* = J_\lambda^g(x^*)$. Then,  by (\ref{ineq-res}), for each $x \in X$,
\[f(x^*) + \frac{1}{2\lambda}d(x^*,y^*)^2 \le f(x) + \frac{1}{2\lambda}d(x,y^*)^2 - \frac{1}{2\lambda}d(x,x^*)^2\]
and, similarly, for every $y \in X$,
\[g(y^*) + \frac{1}{2\lambda}d(y^*,x^*)^2 \le g(y) + \frac{1}{2\lambda}d(y,x^*)^2 - \frac{1}{2\lambda}d(y,y^*)^2.\]
Adding these two inequalities and applying (\ref{eq:CAT(0).four.points}) we obtain that $(x^*,y^*) \in S$.

Since the resolvents $J_\lambda^f$ and $J_\lambda^g$ are firmly nonexpansive mappings, we can directly apply Theorem \ref{thm-as-reg-P2} to obtain that the sequences $(x_n)$ and $(y_n)$ are asymptotically regular when $S \ne \emptyset$. This property also follows from \cite[Theorem 1]{Ban14}, where the given proof relies on particular properties of the resolvent and uses, instead of the assumption that $S \ne \emptyset$, the weaker condition that $\Phi$ is bounded below. In fact, analyzing this proof, one can immediately extract a rate of asymptotic regularity for the sequences $(x_n)$ and $(y_n)$ and a rate for the computation of $\varepsilon$-solutions of the minimization problem (\ref{def-fct-min}).
\begin{corollary} \label{minpro1}
Let $(X,d)$ be CAT$(0)$ space and suppose that $\Phi$ is bounded below by $m \in \mathbb{R}$. Consider $x_0\in X$ and $b>0$ such that $\Phi\left((J_\lambda^f \circ J_\lambda^g)(x_0),J_\lambda^g(x_0)\right) \leq b$. Suppose that $(x_n)$ and $(y_n)$ are defined by~\eqref{it:two} with $T_1 = J_\lambda^g$, $T_2=J_\lambda^f$, $\varepsilon_n = 0$ and $\delta_n = 0$ for every $n \in \N$. Then for all $\eps>0$ and $n\geq\theta(\eps,b,m,\lambda)$, we have that $$d(y_n,y_{n+1}) \leq d(x_n,x_{n+1}) \leq \eps,$$ where
\begin{equation} \label{cor-min-rate-as}
\theta(\eps,b,m,\lambda):=\left[\frac{2\lambda(b-m)}{\eps^2}\right] + 1.
\end{equation}
Moreover, if $(x^*,y^*) \in S$, then for all $\varepsilon > 0$ and $n \ge 1+\theta\left(\varepsilon \lambda/d(x_0,x^*),b,m,\lambda\right)$,
\[\Phi(x_n,y_n) \le \Phi(x^*,y^*) + \varepsilon.\]
\end{corollary}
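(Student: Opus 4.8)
The statement is Corollary~\ref{minpro1}, which asserts a quantitative rate of asymptotic regularity for the alternating resolvent scheme applied to problem~\eqref{def-fct-min}, together with a rate for computing $\varepsilon$-solutions. My plan is to mimic the quantitative argument of Theorem~\ref{thm-as-reg-fn-quant}, but adapted to the $p=2$ setting with the sharper control that the resolvent inequality~\eqref{ineq-res} provides via the objective $\Phi$, rather than via distances to a fixed point.

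The core idea is to use $\Phi$ itself as the monotone Lyapunov functional. Setting $S = J_\lambda^f \circ J_\lambda^g$, so that $x_{n+1} = S x_n$ and $y_n = J_\lambda^g(x_n)$ (with $\varepsilon_n=\delta_n=0$), I would first establish a one-step decrease estimate of the form
\begin{equation*}
\Phi(x_{n+1},y_{n+1}) \le \Phi(x_n,y_n) - \frac{1}{2\lambda}\big(d(x_n,x_{n+1})^2 + d(y_n,y_{n+1})^2\big).
\end{equation*}
This is obtained by applying~\eqref{ineq-res} twice — once to $f$ with the test point $x = x_n$ and once to $g$ with an appropriate test point — and combining the two resulting inequalities, discarding a nonnegative cross term (or absorbing it via Reshetnyak's inequality~\eqref{eq:CAT(0).four.points}, exactly as in the verification that fixed points of $S$ give minimizers of $\Phi$ earlier in the excerpt). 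The key point is that $\Phi(x_n,y_n)$ is nonincreasing and bounded below by $m$, so the telescoping sum $\sum_n \big(d(x_n,x_{n+1})^2 + d(y_n,y_{n+1})^2\big)$ is bounded by $2\lambda\,(\Phi(x_0,y_0) - m) \le 2\lambda(b-m)$.

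From the summability bound I would extract the rate. Since the summand $d(x_n,x_{n+1})^2 + d(y_n,y_{n+1})^2$ is itself monotone nonincreasing in $n$ (which follows because $\Phi$-decrements shrink along the orbit, a consequence of firm nonexpansivity of $S$), the standard pigeonhole argument gives that the $n$-th term is at most $2\lambda(b-m)/(n+1)$; solving $2\lambda(b-m)/(n+1) \le \eps^2$ yields precisely the threshold $\theta(\eps,b,m,\lambda) = \lceil 2\lambda(b-m)/\eps^2\rceil + 1$, and $d(y_n,y_{n+1}) \le d(x_n,x_{n+1}) \le \eps$ follows since each of the two squared distances is bounded by their sum. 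For the $\varepsilon$-solution claim, I would use~\eqref{ineq-res} one more time at the point $x^*$ to bound $\Phi(x_n,y_n) - \Phi(x^*,y^*)$ by a multiple of $d(x_n,x_{n+1})\,d(x_n,x^*)$ (a Cauchy–Schwarz-type term coming from the first-order optimality encoded in the resolvent inequality), then control $d(x_n,x^*) \le d(x_0,x^*)$ using that $S$ is nonexpansive and fixes $x^*$, and finally invoke the already-proved rate at the rescaled accuracy $\varepsilon\lambda/d(x_0,x^*)$.

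The main obstacle I anticipate is the monotonicity of the step sizes $d(x_n,x_{n+1})$ (equivalently of the $\Phi$-decrements): a bare telescoping bound only gives that the \emph{minimum} over $n \le N$ is small, not the $N$-th term, so to get a clean rate valid for \emph{all} $n \ge \theta$ one genuinely needs that $\big(d(x_n,x_{n+1})\big)$ is nonincreasing. Establishing this requires combining the nonexpansivity of $S$ with the firm-nonexpansivity refinement, much as monotonicity of $\big(d(x_{2j},x_{2j+2})\big)$ was used in Theorem~\ref{thm-as-reg-fn-quant}; verifying it carefully in the CAT$(0)$ geodesic setting, where one cannot simply subtract vectors, is the delicate part and is where Reshetnyak's inequality~\eqref{eq:CAT(0).four.points} will again do the heavy lifting.
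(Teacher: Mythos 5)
Your proposal follows essentially the same route as the paper's proof: both use the resolvent inequality \eqref{ineq-res} to produce a one-step decrease of the (nonincreasing) values of $\Phi$ along the iteration, telescope to bound $\sum_n d(x_n,x_{n+1})^2$ by $2\lambda(b-m)$, extract the threshold $\theta$ by pigeonhole plus monotonicity of the step sizes, and obtain the $\varepsilon$-solution estimate from the Fej\'er-type bound $d(x_n,x^*)^2-d(x_{n+1},x^*)^2\le 2\,d(x_0,x^*)\,d(x_n,x_{n+1})$ together with $\Phi(x_{n+1},y_{n+1})\le\Phi(x_{n+1},y_n)$; the paper merely quotes the key one-step inequality from \cite{Ban14} rather than rederiving it. Two corrections to your plan. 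First, your telescoping must be anchored at $\Phi(x_1,y_0)=\Phi\bigl((J_\lambda^f\comp J_\lambda^g)(x_0),J_\lambda^g(x_0)\bigr)\le b$ (or at $\Phi(x_1,y_1)\le\Phi(x_1,y_0)$), not at $\Phi(x_0,y_0)$: the hypothesis does not control $\Phi(x_0,y_0)$, and it can even equal $+\infty$ (take $f=\delta_A$ with $x_0\notin A$), so the sum must start at $n=1$ --- which still yields exactly the stated $\theta$. Second, the point you single out as delicate is not: the monotonicity of $n\mapsto d(x_n,x_{n+1})$ is immediate from the nonexpansivity of $S=J_\lambda^f\comp J_\lambda^g$, and $d(y_n,y_{n+1})\le d(x_n,x_{n+1})$ from that of $J_\lambda^g$; no Reshetnyak argument is needed. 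Likewise your claimed one-step inequality
\begin{equation*}
\Phi(x_{n+1},y_{n+1}) \le \Phi(x_n,y_n) - \frac{1}{2\lambda}\bigl(d(x_n,x_{n+1})^2 + d(y_n,y_{n+1})^2\bigr)
\end{equation*}
follows cleanly by adding the two instances of \eqref{ineq-res} with test points $x=x_n$ (for $f$ at $y_n$) and $y=y_n$ (for $g$ at $x_{n+1}$); the terms $\frac{1}{2\lambda}d(x_{n+1},y_n)^2$ cancel exactly, so no cross term needs to be discarded or absorbed via \eqref{eq:CAT(0).four.points}.
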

\begin{proof}
From the proof of \cite[Theorem 1]{Ban14} we know that for every $x,y \in X$ and $n \in \mathbb{N}$,
\begin{equation}\label{cor-minpro-eq1}
\Phi(x_{n+1},y_n) \le \Phi(x,y) + \frac{1}{2\lambda}\left(d(x_n,x)^2 - d(x_{n+1},x)^2\right).
\end{equation}
Taking above $x:=x_n$ and $y:=y_{n-1}$ we obtain that for $n \ge 1$,
\[d(x_n,x_{n+1})^2 \le 2\lambda\left(\Phi(x_n,y_{n-1}) - \Phi(x_{n+1},y_n)\right).\]
From this, as in the proof of Theorem \ref{thm-as-reg-fn-quant}, we have that $\theta$ is a rate of asymptotic regularity for $(x_n)$.

Suppose $(x^*,y^*) \in S$. Then $(d(x_n,x^*))$ is decreasing and bounded above by $d(x_0,x^*)$. Letting now $x:=x^*$ and $y:=y^*$ in (\ref{cor-minpro-eq1}) and using the fact that, by \cite[Corollary 1]{Ban14}, $\Phi(x_{n+1},y_{n+1}) \le \Phi(x_{n+1},y_n)$ for $n \ge 1$, it follows that for every $\varepsilon > 0$ and $n \ge \theta\left(\varepsilon \lambda/d(x_0,x^*),b,m,\lambda\right)$,
\begin{align*}
\Phi(x_{n+1},y_{n+1}) &\le \Phi(x^*,y^*) + \frac{1}{2\lambda}\left(d(x_n,x^*)^2 - d(x_{n+1},x^*)^2\right)\\
&\le \Phi(x^*,y^*) + \frac{1}{\lambda}d(x_n,x_{n+1})d(x_0,x^*) \le \Phi(x^*,y^*) + \varepsilon.
\end{align*}
\qed
\end{proof}
As already noted in \cite{Ban14} (see also \cite{AckPre80,BauComRei05} for the Hilbert space context), the alternating projection method for two nonempty, closed and convex sets $A$ and $B$ in a complete CAT$(0)$ space $X$ is a particular case of this minimization problem when taking $f = \delta_A$ and $g=\delta_B$, where, for $C\subseteq X$ nonempty, closed and convex,
$$\delta_C : X \to [0, \infty], \quad
\delta_C(x):=\left\{
\begin{array}{ll}
0 & \mbox{if } x \in C,\\
\infty & \mbox{otherwise},
\end{array}
\right.
$$
is the indicator function. Clearly, for every $\lambda > 0$, $J_\lambda^{\delta_C} = P_C$. Thus, the sequences $(x_n)$ and $(y_n)$ obtained as described before are asymptotically regular with a rate quadratic in $1/\varepsilon$ that is given by (\ref{cor-min-rate-as}) with $\lambda=1/2$, $m = 0$ and $b \ge d\left((P_A \circ P_B)(x_0), P_B(x_0)\right)^2$. We point out that, if $A \cap B = \emptyset$, one aims to find best approximation pairs with respect to $A$ and $B$, that is, find $(a,b) \in A \times B$ such that $d(a,b) = \text{dist}(A,B)$. In Hilbert spaces, this problem was studied, for instance, in \cite{BauBor94}.
%
\section{Convergence Results} \label{section-conv}
%
Using the asymptotic regularity results from the previous section we prove in this section  that the sequences generated by (\ref{it:inexact}) and (\ref{it:two}) $\Delta$-converge. Moreover, if we impose the condition that the image of one of the considered mappings is boundedly compact, then we obtain strong convergence. The results that we give below find natural applications to projection methods or the minimization problem defined by~(\ref{def-fct-min}).

\begin{theorem} \label{thm-conv-P1}
Let $(X,d)$ be a complete $p$-uniformly convex space and suppose that $T_1, \ldots, T_r :X\to X$ are firmly nonexpansive mappings. If $F:=\bigcap_{i=1}^r\fix(T_i)\neq\emptyset$, then given any starting point $x_0 \in X$, the sequence $(x_n)$ defined by~\eqref{it:inexact} is $\Delta$-convergent to some $u \in F$. If, in addition, there exists $i \in \{1, \ldots, r\}$ such that $T_i(X)$ is boundedly compact, then $(x_n)$ converges to $u$.
\end{theorem}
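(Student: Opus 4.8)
The plan is to combine a Fej\'er-type monotonicity of $(x_n)$ with respect to $F$, a demiclosedness principle for firmly nonexpansive mappings in $p$-uniformly convex spaces, and the uniqueness of asymptotic centers. Throughout I would use that each $T_i$, being firmly nonexpansive with $\fix(T_i)\neq\emptyset$, satisfies property~\eqref{eq:def.P1.maps.beta} with $l=p$ and $\beta=c/2$, that $(x_n)$ is asymptotically regular by Theorem~\ref{thm-as-reg-P1}, and that in a complete $p$-uniformly convex space every bounded sequence has a unique asymptotic center and a $\Delta$-convergent subsequence. First I would fix $u\in F$ and note that, by~\eqref{eq:def.P1.maps.beta}, $d(T_{\m{n}}x_n,u)\le d(x_n,u)$, whence $d(x_{n+1},u)\le d(x_n,u)+\eps_n$; since $\sum\eps_n<\infty$, Lemma~\ref{lemma-conv-seq} shows that $(d(x_n,u))$ converges for every $u\in F$, and in particular $(x_n)$ is bounded.

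The key step, and the one I expect to be the main obstacle, is a demiclosedness principle: if $T$ is firmly nonexpansive, $(z_n)\Dto w$ and $d(z_n,Tz_n)\to0$, then $Tw=w$. Writing $r_\infty(x)=\limsup_n d(x,z_n)$ and $w_\lambda=(1-\lambda)w\oplus\lambda Tw$ for $\lambda\in[0,1)$, firm nonexpansivity~\eqref{def-lambda-fe} together with $d((1-\lambda)z_n\oplus\lambda Tz_n,z_n)=\lambda\,d(z_n,Tz_n)\to0$ gives
\[
r_\infty(Tw)=\limsup_n d(Tw,Tz_n)\le\limsup_n d\big(w_\lambda,(1-\lambda)z_n\oplus\lambda Tz_n\big)=r_\infty(w_\lambda).
\]
On the other hand, applying the $p$-uniform convexity inequality~\eqref{eq:p-UC} with the points $z_n,w,Tw$ and passing to $\limsup$ yields
\[
r_\infty(w_\lambda)^p\le(1-\lambda)\,r_\infty(w)^p+\lambda\,r_\infty(Tw)^p-\frac{c}{2}\lambda(1-\lambda)\,d(w,Tw)^p.
\]
Combining the two displays and dividing by $(1-\lambda)>0$ leads to $r_\infty(Tw)^p\le r_\infty(w)^p-(c/2)\lambda\,d(w,Tw)^p$. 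Since $w$ is the asymptotic center of $(z_n)$, we have $r_\infty(w)\le r_\infty(Tw)$, which forces $d(w,Tw)=0$.

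With this in hand I would identify the cluster points of $(x_n)$ with points of $F$. Fix $i\in\{1,\dots,r\}$ and a subsequence $(x_{n_k})$ that $\Delta$-converges to some $w$. Inside each window $\{n_k,\dots,n_k+r-1\}$ the map $T_i$ is applied exactly once, say at index $m_k$; asymptotic regularity gives $d(x_{m_k},x_{n_k})\le\sum_{l=n_k}^{m_k-1}d(x_l,x_{l+1})\to0$, so $(x_{m_k})$ has the same asymptotic center functional as $(x_{n_k})$ and hence $\Delta$-converges to $w$, while $d(x_{m_k},T_ix_{m_k})=d(x_{m_k},T_{\m{m_k}}x_{m_k})\to0$. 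The demiclosedness principle then forces $T_iw=w$; as $i$ was arbitrary, $w\in F$. Applying this to the full sequence, its asymptotic center $w$ lies in $F$. Finally, if $(x_{n_k})$ is any subsequence with asymptotic center $w'$, then $w'\in F$ as well, so both $\lim_n d(x_n,w)$ and $\lim_n d(x_n,w')$ exist by the first paragraph; comparing the asymptotic radii of $(x_n)$ and of $(x_{n_k})$ shows these limits coincide, forcing $w\in A((x_{n_k}))=\{w'\}$, i.e.\ $w=w'$. Hence $(x_n)$ $\Delta$-converges to $w\in F$.

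For the last assertion, suppose $T_{i_0}(X)$ is boundedly compact. Along the indices $n$ with $\m{n}=i_0$ the points $T_{i_0}x_n$ form a bounded sequence in $T_{i_0}(X)$, so some subsequence converges strongly to a point $z$; since $d(x_{n+1},T_{i_0}x_n)\le\eps_n\to0$, the corresponding terms $x_{n+1}$ also converge strongly to $z$. Strong convergence implies $\Delta$-convergence, and as $(x_n)\Dto w$ this yields $z=w$. Thus a subsequence of $(d(x_n,w))$ tends to $0$; but $(d(x_n,w))$ converges by the first paragraph (since $w\in F$), so $\lim_n d(x_n,w)=0$, i.e.\ $(x_n)$ converges strongly to $w$.
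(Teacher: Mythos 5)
Your proof is correct and follows the same overall skeleton as the paper's: Fej\'er-type monotonicity via Lemma~\ref{lemma-conv-seq}, asymptotic regularity from Theorem~\ref{thm-as-reg-P1}, identification of subsequential asymptotic centers with points of $F$, and the standard Opial-type comparison of radii to get uniqueness of the $\Delta$-limit; the strong-convergence step is also the same. The one place where you genuinely diverge is the identification step. The paper observes that $\lim_n d(x_n,T_ix_n)=0$ for the \emph{whole} sequence and then disposes of the fixed-point property of an asymptotic center $p$ in one line using only nonexpansivity: $d(T_ip,x_{n_k})\le d(p,x_{n_k})+d(T_ix_{n_k},x_{n_k})$, so $T_ip$ is also an asymptotic center and uniqueness gives $T_ip=p$. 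You instead prove a full demiclosedness principle for firmly nonexpansive mappings, invoking \eqref{def-lambda-fe} together with the $p$-uniform convexity inequality \eqref{eq:p-UC} to get the quantitative estimate $r_\infty(Tw)^p\le r_\infty(w)^p-(c/2)\lambda\,d(w,Tw)^p$, and you localize the asymptotic regularity to the index $m_k$ in each window where $T_i$ is applied. Both are valid; your version is heavier than needed here (firm nonexpansivity and the modulus of convexity are not required for this step, only nonexpansivity and uniqueness of asymptotic centers), but it has the mild advantage of not relying on uniqueness of the asymptotic center at that point --- it only uses that $w$ minimizes the asymptotic radius --- and it is essentially the quantitative $(P_1)$-type inequality transported to the level of asymptotic radii.
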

\begin{proof}
Let $n \in \mathbb{N}$ and $q \in F$. Because $d(q, x_{n+1})  \le d\left(q, T_{\overline{n}} x_{n}\right) + \varepsilon_{n} \le d(q,x_n) + \varepsilon_n$ we have, by Lemma \ref{lemma-conv-seq}, that the sequence $\left(d(q,x_n)\right)$ converges.

Let $i \in \{1, \ldots, r\}$. For $n \in \mathbb{N}$, let $m_n, j_n \in \mathbb{N}$ with $j_n \in \{0, \ldots, r-1\}$ be such that $n = m_n r + j_n$. Then,
\begin{align*}
d(x_n, T_i x_n) & \le d(x_n, x_{m_n r + i}) +d(x_{m_n r + i}, T_i x_n)\\
& \le  d(x_{m_n r + j_n}, x_{m_n r + i}) + d(x_{m_n r + i}, T_i x_{m_n r + i-1}) + d(T_i x_{m_n r + i-1}, T_i x_n)\\
& \le d(x_{m_n r + j_n}, x_{m_n r + i}) + \varepsilon_{m_n r + i-1} + d(x_{m_n r + i-1}, x_{m_n r + j_n}).
\end{align*}
By Theorem \ref{thm-as-reg-P1}, $(x_n)$ is asymptotically regular, so $\lim_{n \to \infty}d(x_n, T_i x_n)=0$.

Denote by $u$ the unique asymptotic center of $(x_n)$. Let $(x_{n_k}) \subseteq (x_n)$ and assume that its unique asymptotic center is $p$. Then, for every $i \in \{1, \ldots, r\},$
\begin{align*}
d(T_ip, x_{n_k}) \le d(T_ip, T_i x_{n_k}) + d(T_i x_{n_k},x_{n_k}) \le d(p,x_{n_k}) + d(T_i x_{n_k},x_{n_k}).
\end{align*}
Taking limit superior as $k \to \infty$ we conclude that $p \in F$. Then,
\begin{align*}
\limsup_{k \to \infty}d(x_{n_k},p) & \le \limsup_{k \to \infty}d(x_{n_k},u) \le \limsup_{n \to \infty}d(x_n,u) \le \limsup_{n \to \infty}d(x_n,p)\\
& = \lim_{n \to \infty} d(x_n,p) = \lim_{k \to \infty}d(x_{n_k},p),
\end{align*}
which yields, by uniqueness of asymptotic centers, that $p=u$. Thus, $(x_n)$ $\Delta$-converges to $u$.

We may suppose that $T_r(X)$ is boundedly compact. Since $(x_{nr})$ is a bounded sequence in $T_r(X)$ it has a convergent subsequence whose limit will be $u$. Since $\left(d(u,x_n)\right)$ converges, this implies that $(x_n)$ converges to $u$.\qed
\end{proof}

As a straightforward consequence, the sequence generated by the cyclic projection method for a finite number of sets $\Delta$-converges in CAT$(0)$ spaces. Moreover, if one of the sets is locally compact, then we obtain in fact strong convergence. This result extends \cite[Theorem 4.1]{BacSeaSim12} and  a corresponding one mentioned in \cite[page 5]{Bac13} as an application of a proximal splitting algorithm.

\begin{corollary}\label{cor:projections-convergence}
Let $X$ be a complete CAT$(0)$ space and $C_1, \ldots, C_r \subseteq  X$ be nonempty closed and convex sets such that $\bigcap_{i=1}^r C_i \ne \emptyset$. Given any starting point $x_0 \in X$, let $(x_n)$ be defined by (\ref{it:inexact}) with $T_i = P_{C_i}$, for $i = 1, \ldots, r$. Then $(x_n)$ $\Delta$-converges to some point $u \in \bigcap_{i=1}^r C_i \ne \emptyset$. Moreover, if there exists $i \in \{1,\ldots,r\}$ such that $C_i$ is locally compact, then $(x_n)$ converges to $u$.
\end{corollary}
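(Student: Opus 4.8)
The plan is to obtain this corollary as a direct application of Theorem~\ref{thm-conv-P1}, after checking that the metric projections fit its hypotheses. First I would recall from the preliminaries that a complete $\CAT(0)$ space is a complete $2$-uniformly convex space (with parameter $c=2$), so the ambient space $X$ is of the required type with $p=2$. Next, since each $C_i$ is nonempty, closed and convex, the metric projection $P_{C_i}:X\to X$ is well-defined, single-valued, and firmly nonexpansive in the $\CAT(0)$ setting, as noted earlier. Because $P_{C_i}(x)=x$ precisely when $x\in C_i$, we have $\fix(P_{C_i})=C_i$, and therefore $F:=\bigcap_{i=1}^r\fix(P_{C_i})=\bigcap_{i=1}^r C_i$, which is nonempty by hypothesis. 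Thus all the assumptions of Theorem~\ref{thm-conv-P1} are met with $T_i=P_{C_i}$, and its first part yields directly that $(x_n)$ is $\Delta$-convergent to some $u\in F=\bigcap_{i=1}^r C_i$.

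For the strong convergence statement, the strategy is to translate the local compactness hypothesis on some $C_i$ into the bounded compactness of $T_i(X)$ required by the second part of Theorem~\ref{thm-conv-P1}. The easy observation is that since $P_{C_i}$ is a retraction onto $C_i$, its image is exactly $T_i(X)=P_{C_i}(X)=C_i$. It then remains to argue that a locally compact $C_i$ is in fact boundedly compact, after which the conclusion $(x_n)\to u$ follows immediately from Theorem~\ref{thm-conv-P1}.

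The one genuinely non-formal step—and the point I expect to be the main obstacle—is precisely this passage from \emph{local} compactness to \emph{bounded} compactness. I would handle it by first noting that a closed convex subset $C_i$ of a complete $\CAT(0)$ space is itself a complete geodesic (hence length) space in the induced metric. The Hopf--Rinow theorem for length spaces then applies: a complete, locally compact length space is proper, i.e.\ its closed bounded subsets are compact. Consequently local compactness of $C_i$ forces $T_i(X)=C_i$ to be boundedly compact, and the second assertion of Theorem~\ref{thm-conv-P1} gives the strong convergence of $(x_n)$ to $u$. Everything else in the argument is routine bookkeeping, so the proof reduces to correctly invoking Theorem~\ref{thm-conv-P1} together with this completeness-plus-local-compactness implication.
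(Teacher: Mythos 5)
Your proposal is correct and follows exactly the route the paper intends: the corollary is stated there as a direct consequence of Theorem~\ref{thm-conv-P1}, using that metric projections onto nonempty closed convex subsets of a complete $\CAT(0)$ space are firmly nonexpansive with $\fix(P_{C_i})=C_i$. Your explicit treatment of the passage from local compactness to bounded compactness of $C_i=P_{C_i}(X)$ via the Hopf--Rinow theorem for complete locally compact length spaces is precisely the step the paper leaves implicit, and it is valid since a closed convex subset of a complete $\CAT(0)$ space is itself a complete geodesic space.
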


A similar result also holds in spaces of curvature bounded above by some $\kappa > 0$ if assuming an appropriate bound on the diameter and local compactness of one of the sets.

\begin{proposition}
Let $X$ be a complete CAT$(\kappa)$ space with $\kappa > 0$ and for which ${\rm diam}(X) < \pi/(2\sqrt{\kappa})$. Let $C_1, \ldots, C_r \subseteq  X$ be nonempty closed and convex sets such that $\bigcap_{i=1}^r C_i \ne \emptyset$ and there exists $i \in \{1,\ldots,r\}$ such that $C_i$ is locally compact. Given any starting point $x_0 \in X$, let $(x_n)$ be defined by (\ref{it:inexact}) with $T_i = P_{C_i}$, for $i = 1, \ldots, r$. Then $(x_n)$ converges to some point in $\bigcap_{i=1}^r C_i$.
\end{proposition}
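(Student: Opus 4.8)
The plan is to combine the asymptotic regularity that is already available from Corollary~\ref{cor-proj-as-ref} with a strong-compactness argument. The point is that when $\kappa>0$ the projections $P_{C_i}$ need not be nonexpansive, so the $\Delta$-convergence machinery of Theorem~\ref{thm-conv-P1} is not at our disposal and the local compactness hypothesis must be exploited to force \emph{strong} subsequential convergence instead.

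First I would record two preliminary facts. By Corollary~\ref{cor-proj-as-ref} the sequence $(x_n)$ is asymptotically regular, so $d(x_n,x_{n+1})\to 0$ and hence $d(x_n,x_{n+l})\to 0$ for every fixed $l$. Moreover, for any $q\in F:=\bigcap_{i=1}^r C_i$, the inequality $d(P_C(x),y)\le d(x,y)$ (valid for $y\in C$ in this setting) gives $d(x_{n+1},q)\le d(x_{n+1},T_{\m n}x_n)+d(T_{\m n}x_n,q)\le \eps_n+d(x_n,q)$; since $\sum\eps_n<\infty$, Lemma~\ref{lemma-conv-seq} shows that $(d(x_n,q))$ converges. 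As $\diam(X)<\infty$, the sequence is bounded in any case.

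Next I would produce a strong cluster point lying in $F$. Let $C_j$ be the locally compact set. Being a closed convex subset of the complete space $X$, $C_j$ is itself a complete geodesic, hence length, space, so by the Hopf--Rinow theorem \cite{BriHae99} local compactness upgrades to bounded compactness. Since $x_{mr+j}$ lies within $\eps_{mr+j-1}$ of $P_{C_j}(x_{mr+j-1})\in C_j$ (because $\m{(mr+j-1)}=j$), we have $\operatorname{dist}(x_{mr+j},C_j)\to 0$; writing $z_m=P_{C_j}(x_{mr+j})\in C_j$ gives $d(x_{mr+j},z_m)\to 0$, and the bounded sequence $(z_m)\subseteq C_j$ has a subsequence $z_{m_k}\to u\in C_j$, whence $x_{m_k r+j}\to u$. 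To see that $u\in C_i$ for every $i$, fix $i$ and choose $l\in\{0,\dots,r-1\}$ with $l\equiv i-j\pmod r$; then $\m{(m_k r+j+l-1)}=i$, so $x_{m_k r+j+l}$ lies within $\eps_{m_k r+j+l-1}$ of a point of $C_i$, giving $\operatorname{dist}(x_{m_k r+j+l},C_i)\to 0$. By asymptotic regularity $x_{m_k r+j+l}\to u$, and continuity of $\operatorname{dist}(\cdot,C_i)$ forces $\operatorname{dist}(u,C_i)=0$, i.e.\ $u\in C_i$. Thus $u\in F$. Finally, applying the convergence of $(d(x_n,u))$ from the previous paragraph (with $q=u$) together with $d(x_{m_k r+j},u)\to 0$ yields $\lim_n d(x_n,u)=0$, so the whole sequence converges to $u\in\bigcap_{i=1}^r C_i$.

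The main obstacle is precisely the loss of nonexpansivity of the projections for $\kappa>0$: this is why Theorem~\ref{thm-conv-P1} cannot be reused and why one must route the argument through bounded compactness of $C_j$, obtained via Hopf--Rinow, and then transport the strong cluster point into every $C_i$ by shifting indices along the cycle using asymptotic regularity. The remaining care is bookkeeping the error terms $\eps_n$ correctly in both the Fej\'er-type estimate and the distance estimates, using only that $\eps_n\to 0$.
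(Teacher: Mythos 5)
Your proof is correct and follows the same overall skeleton as the paper's: Fej\'er-type monotonicity of $(d(x_n,q))$ for $q\in\bigcap_i C_i$ via Lemma~\ref{lemma-conv-seq}, asymptotic regularity from Corollary~\ref{cor-proj-as-ref}, extraction of a strongly convergent subsequence inside the locally compact set, identification of the limit as a point of the intersection, and then convergence of the whole sequence. The one place where you genuinely diverge is in showing that the cluster point $u$ lies in every $C_i$: the paper proves $\lim_n d(x_n,P_{C_i}x_n)=0$ for each $i$, which requires the uniform continuity of the projections in CAT$(\kappa)$ spaces (a fact recalled in Subsection~\ref{subsection-geod-sp} and the reason the paper restricts to this bounded-diameter setting), and then passes to the limit along the convergent subsequence. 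You instead shift indices along the cycle, use asymptotic regularity to carry the strong limit from $x_{m_kr+j}$ to $x_{m_kr+j+l}$, and conclude via $\operatorname{dist}(\cdot,C_i)$ and closedness of $C_i$; this avoids any continuity property of $P_{C_i}$ beyond the quasi-nonexpansiveness $d(P_{C_i}(x),y)\le d(x,y)$ for $y\in C_i$, and is in that respect more elementary. You are also slightly more careful than the paper's write-up on two points: the error terms $\eps_n$ mean $x_{nr}$ need not lie exactly in $C_r$ (you fix this by projecting first), and the passage from local compactness to bounded compactness via Hopf--Rinow is made explicit rather than implicit. Both arguments are sound; yours trades the uniform-continuity lemma for a small amount of index bookkeeping.
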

\begin{proof}
As in the proof of Theorem \ref{thm-conv-P1}, one has that $\left(d(q,x_n)\right)$ is convergent for every $q \in \bigcap_{i=1}^r C_i$ and $\lim_{n \to \infty}d(x_n, P_i x_n)=0$ for $i = 1, \ldots, r$ (here one applies Corollary~\ref{cor-proj-as-ref} and the fact that $P_i$ is uniformly continuous as mentioned in Subsection~\ref{subsection-geod-sp}).

We can suppose that $C_r$ is locally compact. Since $(x_{nr})$ is a bounded sequence in $C_r$ it has a convergent subsequence $(x_{n_k r})$. Suppose its limit is $u \in X$. Taking into account that, for every $i \in \{1, \ldots, r\}$, $\lim_{k \to \infty}d(x_{n_k r},P_i x_{n_k r}) = 0$, it follows that $u \in \bigcap_{i=1}^r C_i$. Thus, because $\left(d(u,x_n)\right)$ converges, we have that $\lim_{n \to \infty} x_n = u$ and we are done.\qed
\end{proof}

We focus next on the convergence of the sequences defined by (\ref{it:two}) when the considered mappings satisfy $(P_2)$.

\begin{theorem} \label{thm-conv-P2}
Let $(X,d)$ be a complete CAT$(0)$ space. Suppose $T_1,T_2:X\to X$ satisfy property $(P_2)$. Given any starting point $x_0 \in X$, consider the sequences $(x_n)$ and $(y_n)$ defined by (\ref{it:two}). The following statements hold true:
\begin{itemize}
\item[(i)] if ${\rm Fix}(T_2 \circ T_1) = \emptyset$, then $(x_n)$ and $(y_n)$ are unbounded;
\item[(ii)] if ${\rm Fix}(T_2 \circ T_1) \ne \emptyset$, then there exists $u \in {\rm Fix}(T_2 \circ T_1)$ such that $(x_n)$ and $(y_n)$ $\Delta$-converge to $u$ and $T_1 u$, respectively. If, in addition, the image of $T_1$ or $T_2$ is boundedly compact, then the convergence is strong.
\end{itemize}
\end{theorem}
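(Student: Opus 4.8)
The plan is to treat the two cases separately, but to base both on the asymptotic regularity established in Theorem~\ref{thm-as-reg-P2} together with Remark~\ref{rmk-as-reg-P2-bound}, which already tells us that boundedness of $(x_n)$ and of $(y_n)$ are equivalent and that either is equivalent to asymptotic regularity of both sequences. This reduces the dichotomy between (i) and (ii) to a single boundedness question.

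For part~(i), I would argue contrapositively. Suppose $\operatorname{Fix}(T_2\comp T_1)=\emptyset$ but, say, $(x_n)$ is bounded. By Remark~\ref{rmk-as-reg-P2-bound} the proof of Theorem~\ref{thm-as-reg-P2} goes through under the boundedness assumption in place of $\operatorname{Fix}(T_2\comp T_1)\ne\emptyset$, so both $(x_n)$ and $(y_n)$ are asymptotically regular; in particular $\lim_n d(x_{n+1},Sx_n)=0$ with $S=T_2\comp T_1$, and $\lim_n d(Sx_{n+1},Sx_n)=0$. Now I would produce a fixed point of $S$ to reach the contradiction. Since $(x_n)$ is bounded, let $u$ be its unique asymptotic center (a complete CAT$(0)$ space has unique asymptotic centers for bounded sequences). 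Property~\eqref{eq:def.P2.maps} forces both $T_1$ and $T_2$, hence $S$, to be nonexpansive, and asymptotic regularity gives $\lim_n d(x_n,Sx_n)=0$, so $u$ is a fixed point of $S$ by the standard asymptotic-center (demiclosedness) argument: comparing $\limsup_n d(Su,x_n)$ with $\limsup_n d(u,x_n)$ using $d(Su,x_n)\le d(Su,Sx_n)+d(Sx_n,x_n)\le d(u,x_n)+d(Sx_n,x_n)$ shows $Su$ is also an asymptotic center, whence $Su=u$. This contradicts $\operatorname{Fix}(S)=\emptyset$, proving~(i).

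For part~(ii), assume $\operatorname{Fix}(T_2\comp T_1)\ne\emptyset$ and fix $u_0\in\operatorname{Fix}(S)$. The computation already displayed in the proof of Theorem~\ref{thm-as-reg-P2} shows $d(Sx_{n+1},u_0)\le d(x_{n+1},u_0)\le \delta_n+\eps_n+d(Sx_n,u_0)$, so by Lemma~\ref{lemma-conv-seq} the sequence $\bigl(d(Sx_n,u_0)\bigr)$ converges and $(x_n)$, $(y_n)$ are bounded; Theorem~\ref{thm-as-reg-P2} then gives asymptotic regularity of both. For the $\Delta$-convergence I would run the standard scheme: first show that for \emph{any} $q\in\operatorname{Fix}(S)$ the sequence $\bigl(d(x_n,q)\bigr)$ converges (again via Lemma~\ref{lemma-conv-seq}, since $d(x_{n+1},q)\le \delta_n+\eps_n+d(x_n,q)$). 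Next, take any subsequence $(x_{n_k})$ with unique asymptotic center $p$; asymptotic regularity yields $\lim_k d(x_{n_k},Sx_{n_k})=0$, and the demiclosedness argument above (with $S$ nonexpansive) gives $p\in\operatorname{Fix}(S)$. The usual inequality chain
\[
\limsup_k d(x_{n_k},p)\le\limsup_k d(x_{n_k},u)\le\lim_n d(x_n,p)=\lim_k d(x_{n_k},p),
\]
where $u$ is the asymptotic center of the whole sequence, forces $p=u$ by uniqueness, so $(x_n)$ $\Delta$-converges to some $u\in\operatorname{Fix}(S)$. Since $T_1$ is nonexpansive and $d(T_1x_n,y_n)\le\eps_n\to0$, one checks that $(y_n)$ $\Delta$-converges to $T_1u$: nonexpansiveness transports asymptotic centers, giving $A((y_{n_k}))=\{T_1u\}$ for every subsequence. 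Finally, for the strong-convergence addendum, suppose $T_1(X)$ (resp.\ $T_2(X)$) is boundedly compact; then the bounded sequence $(y_n)$ (resp.\ $(x_n)$) lies, up to the vanishing error terms, in a boundedly compact set, so it has a strongly convergent subsequence whose limit must be $T_1u$ (resp.\ $u$) by the already-established $\Delta$-limit, and convergence of $\bigl(d(u,x_n)\bigr)$ upgrades this to convergence of the full sequence.

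The main obstacle I anticipate is the demiclosedness step---verifying that the unique asymptotic center of a subsequence is a fixed point of $S$. This is where nonexpansiveness of $S$ (a consequence of property~\eqref{eq:def.P2.maps} in the CAT$(0)$ setting, as noted in the paper) and uniqueness of asymptotic centers in complete CAT$(0)$ spaces are both essential; the rest is bookkeeping with Lemma~\ref{lemma-conv-seq} and the triangle inequality, together with the transfer of $\Delta$-limits through the nonexpansive map $T_1$ for the $(y_n)$ statement.
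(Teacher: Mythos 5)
Your part (i) and your $\Delta$-convergence argument for $(x_n)$ in part (ii) are correct and follow essentially the same route as the paper: asymptotic regularity from Theorem~\ref{thm-as-reg-P2} and Remark~\ref{rmk-as-reg-P2-bound}, Fej\'er-type monotonicity of $\left(d(x_n,q)\right)$ for $q\in\fix(T_2\comp T_1)$ via Lemma~\ref{lemma-conv-seq}, and the demiclosedness argument based on uniqueness of asymptotic centers (the paper runs it on $(Sx_n)$ rather than on $(x_n)$, an immaterial difference). The strong-convergence addendum is also handled as in the paper.

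The genuine gap is the step ``nonexpansiveness transports asymptotic centers, giving $A((y_{n_k}))=\{T_1u\}$.'' That principle is false: nonexpansive maps do not preserve asymptotic centers or $\Delta$-limits, even in Hilbert space. For instance, an orthonormal sequence $(e_n)$ has asymptotic center $0$, while $Tx=\nor{x}v$ is nonexpansive with $Te_n=v$ for all $n$, so $A((Te_n))=\{v\}\neq\{T0\}$. The conclusion you want is true, but it requires the extra structure of the iteration. The paper's route is: $(y_n)$ satisfies the analogous recursion $d(y_{n+1},(T_1\comp T_2)y_n)\le\eps_{n+1}+\delta_n$, so the argument you already ran for $(x_n)$ yields a $\Delta$-limit $v\in\fix(T_1\comp T_2)$ of $(y_n)$; one then identifies $v=T_1u$ by comparing limits of distances. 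Since $T_2v\in\fix(T_2\comp T_1)$ and $T_1u\in\fix(T_1\comp T_2)$, all the limits below exist, and since $u$ and $v$ minimize the respective $\limsup$ functionals,
\[
\lim_{n\to\infty} d(u,x_n)\le\lim_{n\to\infty} d(T_2v,x_{n+1})\le\lim_{n\to\infty} d(v,y_n)\le\lim_{n\to\infty} d(T_1u,y_n)\le\lim_{n\to\infty} d(u,x_n),
\]
where the second inequality uses $d(T_2v,x_{n+1})\le d(v,y_n)+\delta_n$ and the last uses $d(T_1u,y_n)\le d(u,x_n)+\eps_n$. Hence $\lim_{n\to\infty} d(v,y_n)=\lim_{n\to\infty} d(T_1u,y_n)$, and uniqueness of asymptotic centers gives $v=T_1u$. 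With this replacement your proof is complete.
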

\begin{proof}
(i) Suppose $(x_n)$ is bounded. Denote $S = T_2 \circ T_1$. Then, for every $n \in \mathbb{N}$ and $z \in X$,
\begin{equation}\label{thm-conv-P2-eq1}
\begin{split}
d(x_{n+1}, Sz) & \le d(x_{n+1}, T_2 y_n) + d(T_2 y_n, T_2(T_1z)) \le \delta_n + d(y_n,T_1 z) \\
& \le \delta_n + d(y_n, T_1 x_n) + d(T_1 x_n, T_1 z) \le \delta_n + \varepsilon_n + d(x_n, z).
\end{split}
\end{equation}
Thus, $\lim_{n \to \infty}d(x_{n+1}, Sx_{n}) = 0$. Denote by $u$ the unique asymptotic center of $(Sx_{n})$. Let $(Sx_{n_k}) \subseteq (Sx_{n})$ and assume that its unique asymptotic center is $p$. Then,
\begin{align*}
d(Sp, Sx_{n_k}) & \le d(Sp, Sx_{n_k + 1}) + d(Sx_{n_k + 1},Sx_{n_k}) \\
&\le d(p,Sx_{n_k}) + d(Sx_{n_k},x_{n_k + 1}) + d(x_{n_k + 1},x_{n_k}).
\end{align*}
Taking limit superior as $k \to \infty$ and using Theorem \ref{thm-as-reg-P2} and Remark \ref{rmk-as-reg-P2-bound} we conclude that $p \in \text{Fix}(S)$, which shows that $\text{Fix}(S) \ne \emptyset$.

(ii) If $\text{Fix}(T_2 \circ T_1) \ne \emptyset$, clearly $(x_n)$ and $(y_n)$ are bounded and we continue the reasoning from (i). Applying (\ref{thm-conv-P2-eq1}) with $z = p$ we have, by Lemma \ref{lemma-conv-seq}, that $\left(d(x_{n},p)\right)$ converges. This yields that $\left(d(Sx_{n},p)\right)$ is convergent too. Then,
\begin{align*}
\lim_{k \to \infty}d(Sx_{n_k},p)
& \le \limsup_{k \to \infty}d(Sx_{n_k},u) \le \limsup_{n \to \infty}d(Sx_{n},u) \\
& \le \lim_{n \to \infty} d(Sx_{n},p) = \lim_{k \to \infty}d(Sx_{n_k},p),
\end{align*}
which means that $p = u$. Thus, $(Sx_{n})$ $\Delta$-converges to $u$, from where $(x_{n})$ $\Delta$-converges to $u$ too.

In a similar way one obtains that there exists $v \in  {\rm Fix}(T_1 \circ T_2)$ such that $(y_n)$ $\Delta$-converges to $v$. We need to prove that $v=T_1u$. Since 
\[d(T_2 v,x_{n+1}) \le d(T_2 v,T_2 y_{n}) + \delta_{n} \le d(v,y_{n}) + \delta_{n},\] 
we have that
\[\lim_{n \to \infty} d(u,x_n) \le \lim_{n \to \infty} d(T_2 v,x_{n+1}) \le \lim_{n \to \infty} d(v,y_{n}) \le \lim_{n \to \infty} d(T_1 u,y_{n}).\]
One can also show that $\lim_{n \to \infty} d(T_1 u,y_{n}) \le \lim_{n \to \infty}d(u,x_{n})$, which implies that $v=T_1 u$.

Strong convergence when the image of $T_1$ or $T_2$ is boundedly compact can be obtained as in the proof of Theorem \ref{thm-conv-P1}.\qed
\end{proof}

It follows immediately that we can recover the convergence result for problem (\ref{def-fct-min}) that was proved in \cite[Theorem 1]{Ban14}.
\begin{corollary} \label{minpro2}
Let $(X,d)$ be a complete CAT$(0)$ space and $f,g : X \to ]-\infty,\infty]$ convex, lower semi-continuous and proper. Suppose (\ref{def-fct-min}) has a solution. Given $x_0 \in X$, consider the sequences $(x_n)$ and $(y_n)$ defined by (\ref{it:two}), where $T_1 = J_\lambda^g$ and $T_2=J_\lambda^f$. Then there exists $u \in X$ such that $(u,J_\lambda^g u)$ is a solution of (\ref{def-fct-min}) for which $(x_n)$ and $(y_n)$ $\Delta$-converge to $u$ and $J_\lambda^g u$, respectively.
\end{corollary}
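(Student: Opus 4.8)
The plan is to read this off Theorem \ref{thm-conv-P2}(ii) applied to the two resolvents, once the hypotheses of that theorem have been checked. Two things are needed: that $T_1=J_\lambda^g$ and $T_2=J_\lambda^f$ satisfy property~\eqref{eq:def.P2.maps}, and that $\fix(T_2\circ T_1)\neq\emptyset$. For the first, I would recall from Subsection~2.2 that in a complete CAT$(0)$ space the resolvent of a proper convex lower semicontinuous function is firmly nonexpansive, and that every firmly nonexpansive mapping on a CAT$(0)$ space satisfies~\eqref{eq:def.P2.maps}. Hence both $T_1$ and $T_2$ satisfy $(P_2)$ and Theorem \ref{thm-conv-P2} is applicable.

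For the second, I would use the correspondence between solutions of~\eqref{def-fct-min} and fixed points of $J_\lambda^f\circ J_\lambda^g$ that was established in the discussion preceding Corollary \ref{minpro1}. Since~\eqref{def-fct-min} has a solution by hypothesis, pick $(x^*,y^*)\in S$; as observed there, $y^*=J_\lambda^g(x^*)$ and $x^*=J_\lambda^f(y^*)$, so $x^*\in\fix(J_\lambda^f\circ J_\lambda^g)=\fix(T_2\circ T_1)$, which is therefore nonempty.

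With both hypotheses in place, I would invoke Theorem \ref{thm-conv-P2}(ii) verbatim: there exists $u\in\fix(T_2\circ T_1)$ such that $(x_n)$ $\Delta$-converges to $u$ and $(y_n)$ $\Delta$-converges to $T_1u=J_\lambda^g u$. It then only remains to argue that $(u,J_\lambda^g u)$ actually solves~\eqref{def-fct-min}. For this I would again appeal to the paragraph before Corollary \ref{minpro1}: since $u\in\fix(J_\lambda^f\circ J_\lambda^g)$, setting $y^*=J_\lambda^g u$ and combining the two resolvent inequalities~\eqref{ineq-res} for $f$ and $g$ with the Reshetnyak inequality~\eqref{eq:CAT(0).four.points} yields $(u,J_\lambda^g u)\in S$.

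There is essentially no genuine obstacle here, since all the analytic content already resides in Theorem \ref{thm-conv-P2} and in the discussion preceding Corollary \ref{minpro1}; the corollary is a bookkeeping exercise. The only point demanding mild care is keeping the identifications $T_1=J_\lambda^g$, $T_2=J_\lambda^f$ consistent with the statement of Theorem \ref{thm-conv-P2}, and in particular confirming that the $\Delta$-limit of $(y_n)$ is precisely $J_\lambda^g u$ (i.e.\ $T_1u$, the value furnished by part~(ii)) rather than merely some unidentified element of $\fix(T_1\circ T_2)$.
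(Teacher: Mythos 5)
Your proposal is correct and follows exactly the route the paper intends: the paper gives no explicit proof for this corollary, presenting it as an immediate consequence of Theorem \ref{thm-conv-P2}(ii) together with the equivalence between solutions of (\ref{def-fct-min}) and fixed points of $J_\lambda^f \circ J_\lambda^g$ established in the discussion preceding Corollary \ref{minpro1}. Your verification that the resolvents satisfy $(P_2)$, your identification of the $\Delta$-limit of $(y_n)$ as $T_1 u = J_\lambda^g u$, and your final check that $(u, J_\lambda^g u) \in S$ all match the paper's intended argument.
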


Consequently, one obtains the convergence of the alternating projection method in CAT$(0)$ spaces.

\begin{corollary}
Let $(X,d)$ be a complete CAT$(0)$ space and $A,B \subseteq X$ nonempty, closed and convex. Suppose that $S =  \left\{(a,b) \in A \times B : d(a,b)={\rm dist}(A,B)\right\} \ne \emptyset$. Given $x_0 \in X$, consider the sequences $(x_n)$ and $(y_n)$ defined by (\ref{it:two}), when $T_1=P_B$ and $T_2=P_A$. Then there exists $(a,b) \in S$ such that $(x_n)$ and $(y_n)$ $\Delta$-converge to $a$ and $b$, respectively. If, in addition, $A$ or $B$ are locally compact, then the convergence is strong.
\end{corollary}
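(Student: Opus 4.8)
The plan is to read this off directly from Theorem~\ref{thm-conv-P2} by specializing the minimization framework of this section to indicator functions. First I would recall that, since $A$ and $B$ are nonempty, closed and convex subsets of a complete CAT$(0)$ space, the metric projections $P_A$ and $P_B$ are firmly nonexpansive and hence satisfy property~\eqref{eq:def.P2.maps}. Setting $T_1=P_B$ and $T_2=P_A$, one has $T_2\circ T_1=P_A\circ P_B$, so the sequences $(x_n)$ and $(y_n)$ defined by~\eqref{it:two} are exactly those governed by Theorem~\ref{thm-conv-P2}.

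The key reduction is to identify the best approximation set $S$ with the solution set of~\eqref{def-fct-min}. Taking $f=\delta_A$ and $g=\delta_B$ for any $\lambda>0$ (say $\lambda=1/2$), we have $J_\lambda^{\delta_A}=P_A$ and $J_\lambda^{\delta_B}=P_B$, and minimizing $\Phi(x,y)=\delta_A(x)+\delta_B(y)+\frac{1}{2\lambda}d(x,y)^2$ forces $x\in A$, $y\in B$ and $d(x,y)={\rm dist}(A,B)$; thus the solution set of~\eqref{def-fct-min} coincides with $S$. I would then invoke the correspondence established just before Corollary~\ref{minpro1}: $S\neq\emptyset$ is equivalent to $\fix(P_A\circ P_B)\neq\emptyset$, and every $u\in\fix(P_A\circ P_B)$ yields $(u,P_B u)\in S$. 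In particular the hypothesis $S\neq\emptyset$ guarantees $\fix(T_2\circ T_1)\neq\emptyset$, placing us in case (ii) of Theorem~\ref{thm-conv-P2}.

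With this in hand the convergence is immediate: by Theorem~\ref{thm-conv-P2}(ii) there exists $u\in\fix(P_A\circ P_B)$ such that $(x_n)$ $\Delta$-converges to $u$ and $(y_n)$ $\Delta$-converges to $T_1 u=P_B u$. Setting $a=u$ and $b=P_B u$, we have $a\in A$ (since $u=P_A(P_B u)$), $b\in B$, and $(a,b)\in S$ by the correspondence above, which is precisely the asserted conclusion. For the strong convergence statement, suppose $A$ is locally compact. As a closed convex subset of a complete CAT$(0)$ space, $A$ is itself a complete geodesic space, so by the Hopf--Rinow theorem local compactness makes it proper, hence boundedly compact; since $T_2(X)=P_A(X)\subseteq A$, bounded sequences in the image of $T_2$ admit convergent subsequences, and symmetrically if $B$ is locally compact this holds for the image of $T_1=P_B$. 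The final assertion of Theorem~\ref{thm-conv-P2}(ii) then upgrades $\Delta$-convergence to strong convergence.

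I expect no genuine obstacle here: the statement is a corollary, and the two points requiring a word of justification are (a) the identification of $S$ with $\fix(P_A\circ P_B)$, which is handled entirely by the indicator-function specialization already developed for~\eqref{def-fct-min}, and (b) the passage from local compactness of a set to bounded compactness of the corresponding projection's image, which is the standard Hopf--Rinow argument used earlier in Corollary~\ref{cor:projections-convergence}. Everything else is a direct application of Theorem~\ref{thm-conv-P2}.
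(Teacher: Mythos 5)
Your proposal is correct and follows essentially the same route as the paper, which presents this corollary as an immediate consequence of Corollary~\ref{minpro2} (equivalently, of Theorem~\ref{thm-conv-P2}(ii)) via the specialization $f=\delta_A$, $g=\delta_B$, $J_\lambda^{\delta_A}=P_A$, $J_\lambda^{\delta_B}=P_B$, under which the solution set of~(\ref{def-fct-min}) is exactly the set of best approximation pairs and $S\neq\emptyset$ is equivalent to $\fix(P_A\circ P_B)\neq\emptyset$. Your additional remarks on the strong-convergence case (local compactness of a closed convex set giving bounded compactness of the projection's image) correctly supply the detail the paper leaves implicit.
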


Note that in the above result $S \ne \emptyset$ if, for example, one of the sets $A$ or $B$ is bounded.

\begin{example}
We finish this paper by illustrating the behavior of the alternating projection method in the Poincar\'{e} upper half-plane model, see~\cite{BriHae99}. Using \textit{Mathematica$^\circledR$} version 9.0, we  developed an elementary     program,  \url{http://cfne.url.ph/},  where one can observe the behavior of  the Picard iteration for the composition of  projections onto two geodesics.
   Using this program we obtain  Figure~\ref{fig:1} which clearly depicts   the following possible alternatives: the  first one,  Figure~\ref{fig:1}~(a), corresponds to the situation   where the intersection of the two geodesics is nonempty and illustrates strong convergence of the Picard iteration to a point in this intersection, see Corollay~\ref{cor:projections-convergence};  when  the intersection is empty, as in Figure~\ref{fig:1}~(b) and~(c),   the Picard iteration either converges to a best approximation pair (bounded case) or diverges (unbounded case), as is expected from  Theorem~\ref{thm-conv-P2}.
\begin{figure}[h!]
\begin{center}
\includegraphics[scale=.32]{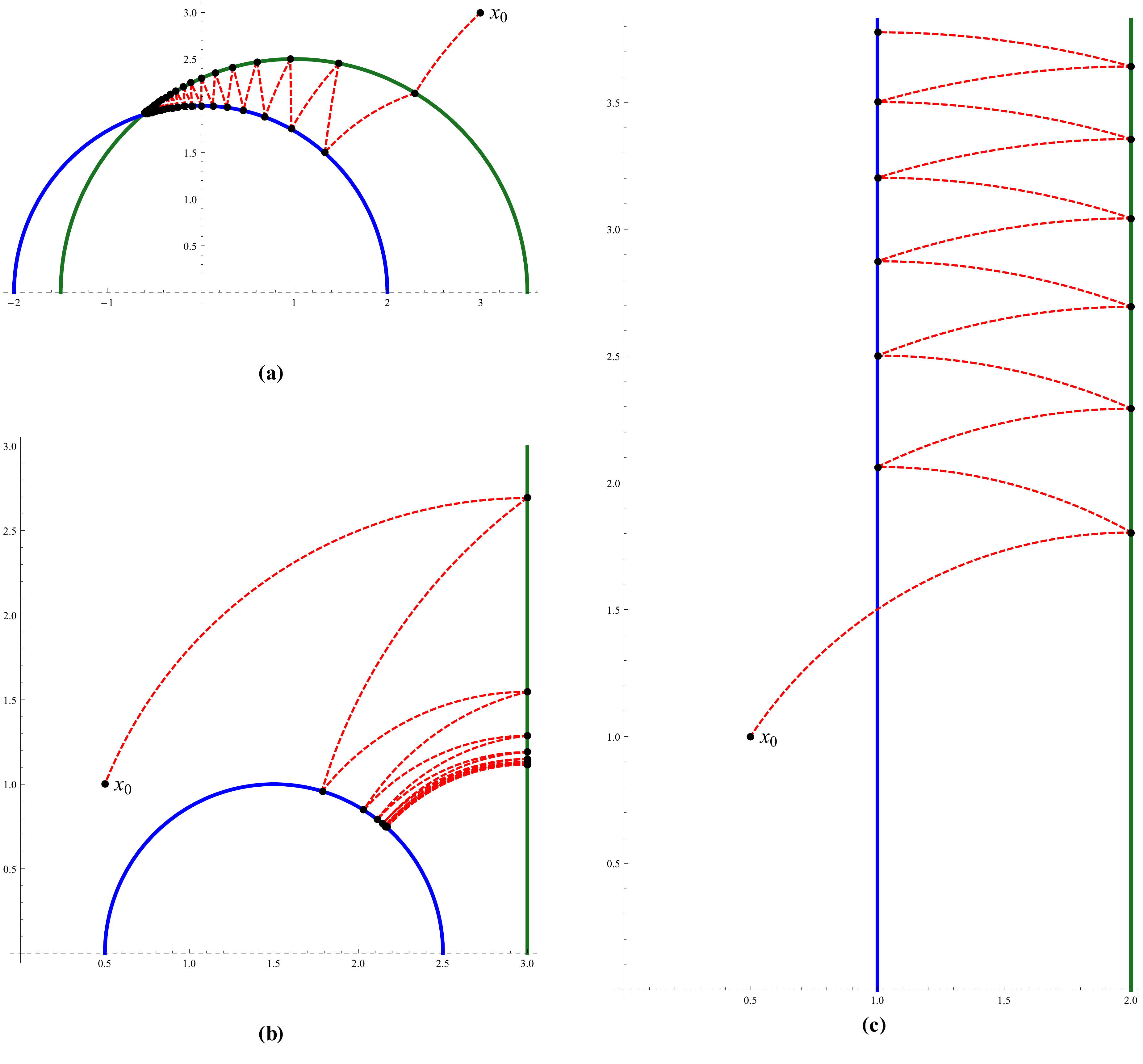}
\caption{Three possible cases for the convergence of alternating projections.}
\label{fig:1}
\end{center}
\end{figure}
\end{example}

\section{Perspectives}
In this paper we have primarily focused on providing an appropriate framework and a general method for the study of the convex feasibility problem for a finite number of sets in a nonlinear setting. For this purpose, it was enough to compose finitely many firmly nonexpansive mappings in a cyclic way. A question which arises naturally is whether one could take infinitely many firmly nonexpansive mappings. Note that, in this case, Theorem \ref{thm-conv-P1} is no longer true without additional assumptions (it is enough to consider the proximal point algorithm in Hilbert spaces where the condition imposed on the stepsize sequence is essential, see \cite{Gul91}).

Regarding the minimization problem (\ref{def-fct-min}), we have given an explicit rate of asymptotic regularity for the sequences $(x_n)$ and $(y_n)$ defined by (\ref{it:two}) with $T_1 = J_\lambda^g$ and $T_2=J_\lambda^f$. It would be interesting to give a quantitative version of Theorem \ref{thm-as-reg-P2} for mappings that satisfy property~\eqref{eq:def.P2.maps} and from where the asymptotic regularity of the sequences $(x_n)$ and $(y_n)$ can be deduced. 

\section{Conclusions}
We have obtained new and more general convergence results to approximate a common fixed point of a finite number of firmly nonexpansive mappings in geodesic spaces. Our results, which are both qualitative and quantitative, can be applied to the cyclic projection method for finitely many sets in geodesic spaces of curvature bounded above or to an abstract minimization problem which allows the study of the alternating projection method for the inconsistent convex feasibility problem for two sets.

\section{Acknowledgements}
David Ariza and Genaro L\'opez were supported by DGES (Grant MTM2012-34847-C02-01), Junta de Andaluc\'{\i}a (Grant P08-FQM-03453). Adriana Nicolae was supported by a grant of the Romanian Ministry of Education, CNCS - UEFISCDI, project number PN-II-RU-PD-2012-3-0152.
Part of this work was carried out while Adriana Nicolae was visiting the University of Seville. She would like to thank the Department of Mathematical Analysis and the Institute of Mathematics of the University of Seville (IMUS) for the hospitality. 
%
%

%

\begin{thebibliography}{99}
\addcontentsline{toc}{section}{References}

\bibitem{BauBor96}
    Bauschke, H.H., Borwein, J.M.:
    On projection algorithms for solving convex feasibility problems.
    SIAM Review {\bf 38}, 367--426 (1996)
    
\bibitem{Com04}
	Combettes, P.L.:
	Solving monotone inclusions via compositions of nonexpansive averaged operators.
	Optimization {\bf 53}, 475--504 (2004)
	
\bibitem{Bru73}
	Bruck, R.E.:
	Nonexpansive projections on subsets of Banach spaces.
	Pacific J. Math. {\bf 47}, 341--355 (1973)

\bibitem{Bro67}
	Browder, F.E.:
	Convergence theorems for sequences of nonlinear operators in Banach spaces.
	Math. Z. {\bf 100}, 201--225 (1967)

\bibitem{Min62}
	Minty, G.J.:
	Monotone (nonlinear) operators in Hilbert space.
	Duke Math. J. {\bf 29}, 341--346 (1962)
	
\bibitem{BauComRei05}
    Bauschke, H.H., Combettes, P.L., Reich, S.:
    The asymptotic behavior of the composition of two resolvents.
    Nonlinear Anal. {\bf 60}, 283--301 (2005)

\bibitem{AckPre80}
    Acker, F., Prestel, M.-A.:
	Convergence d'un squ\'{e}ma de minimisation altern\'{e}e.
    Ann. Fac. Sci. Toulouse {\bf 2}, 1--9 (1980)
    
\bibitem{Tse92}
    Tseng, P.:
    On the convergence of the products of firmly nonexpansive mappings.
    SIAM J. Optim. {\bf 2}, 425--434 (1992)    
    
\bibitem{BruRei77}
	Bruck, R.E., Reich, S.:
	Nonexpasive projections and resolvents of accretive operators in Banach spaces.
	Houston J. Math. {\bf 3}, 459--470 (1977)
    
\bibitem{Udr94}
	Udri\c ste, C.:
	Convex Functions and Optimization Methods on Riemannian Manifolds.
	Mathematics and its Applications, vol. 297. Kluwer Academic Publisher, Dordrecht (1994)    
    
\bibitem{May98}
   Mayer, U.F.:
   Gradient flows on nonpositively curved metric spaces and harmonic maps.
   Comm. Anal. Geom. {\bf 6}, 199--253 (1998)
   
 \bibitem{LiLopMar09}
	Li, C., L\'opez, G., Mart\'in-M\'arquez, V.:
	Monotone vector fields and the proximal point algorithm on Hadamard manifolds.
	J. London Math. Soc. {\bf 79}, 663--683 (2009)
	
\bibitem{LiYao12}
	Li, C., Yao, J.C.:
	Variational inequalities for set-valued vector fields on Riemannian manifolds: convexity of the solution set and the proximal point algorithm.
    SIAM J. Control Optim. {\bf 50}, 2486--2514 (2012)  
   
\bibitem{Jos97}
	Jost, J.:
	Nonpositive Curvature: Geometric and Analytic Aspects.
	Lectures in Mathematics. ETH Z\"{u}rich. Birkh\"{a}user, Basel (1997)   
   
\bibitem{Bac12}
	Ba\v{c}\'{a}k, M.:
	Proximal point algorithms in metric spaces.
	Israel J. Math. {\bf 194}, 689--701 (2013)
	
\bibitem{Bac13}
	Ba\v{c}\'{a}k, M.:
	Computing medians and means in Hadamard spaces.
	SIAM J. Optim. (in press)

\bibitem{BacSeaSim12}
	Ba\v{c}\'{a}k, M., Searston, I., Sims, B.:
	Alternating projections in CAT(0) spaces.
	J. Math. Anal. Appl. {\bf 385}, 599--607 (2012)
	
\bibitem{Nic13}
	Nicolae, A.:
	Asymptotic behavior of averaged and firmly nonexpansive mapping in geodesic spaces.
	Nonlinear Anal. {\bf 87}, 102--115 (2013)	
	
\bibitem{Ban14}
	Banert, S.:
	Backward-backward splitting in Hadamard spaces.
	J. Math. Anal. Appl. {\bf 414}, 656--665 (2014)	
	
\bibitem{BroPet66}
	Browder, F.E., Petryshyn, W.V.:
	The solution by iteration of nonlinear functional equations in Banach spaces.
	Bull. Amer. Math. Soc. {\bf 72}, 571--575 (1966)
	
\bibitem{Koh08}
	Kohlenbach, U.:
	Applied Proof Theory: Proof Interpretations and their Use in Mathematics.
	Springer Monographs in Mathematics. Springer, Berlin-Heidelberg (2008)	
	
\bibitem{BriHae99}
	Bridson, M., Haefliger, A.:
	Metric Spaces of Non-positive Curvature.
	Grundlehren der Mathematischen Wissenschaften, vol. 319. Springer, Berlin (1999)	

\bibitem{BalCarLie94}
	Ball, K., Carlen, E.A., Lieb, E.H.:
	Sharp uniform convexity and smoothness inequalities for trace norms.
	Invent. Math. {\bf 115}, 463--482 (1994)	      
       
\bibitem{NaoSil11}
	Naor, A., Silberman, L.:
	Poincar\'{e} inequalities, embeddings, and wild groups.
	Compos. Math. {\bf 147}, 1546--1572 (2011)      
       
\bibitem{Oht07}
	Ohta, S.-I.:
	Convexities of metric spaces.
	Geom. Dedicata {\bf 125}, 225--250 (2007)     

\bibitem{EspFerPia10}
	Esp\'{\i}nola, R., Fern\'{a}ndez-Le\'{o}n, A., Pi\c{a}tek, B.:
	Fixed points of single- and set-valued mappings in uniformly convex metric spaces with no metric convexity.
	Fixed Point Theory Appl. {\bf 2010}, 16 pp (2010)
	
\bibitem{Lim76}
	Lim, T.C.:
	Remarks on some fixed point theorems.
	Proc. Amer. Math. Soc. {\bf 60}, 179--182 (1976)
	
\bibitem{EspFer09}
	Esp\'{\i}nola, R., Fern\'{a}ndez-Le\'{o}n, A.:
	$\CAT(k)$-spaces, weak convergence and fixed points.
	J. Math. Anal. Appl. {\bf 353}, 410--427 (2009)
	
\bibitem{EspNic13}
	Esp\'{\i}nola, R., Nicolae, A.:
	Uniform convexity of geodesic Ptolemy spaces.
	J. Convex Anal. {\bf 20}, 689--700 (2013)	
	
\bibitem{Pia13}
	Pi\c{a}tek, B.:
	Viscosity iteration in CAT$(\kappa)$ spaces.
	Numer. Funct. Anal. Optim. {\bf 34}, 1245--1264 (2013)	
	
\bibitem{LanPavSch00}
	Lang, U., Pavlovi\'{c}, B., Schroeder, V.:
	Extensions of Lipschitz maps into Hadamard spaces.
	Geom. Funct. Anal. {\bf 10}, 1527--1553 (2000)
	
\bibitem{Bul98}
	Bullen, P.S.:
	A Dictionary of Inequalities.
	Pitman Monographs and Surveys in Pure and Applied Mathematics, vol. 97. Longman, Harlow (1998)	
	
\bibitem{AriLeuLop14}
	Ariza, D., Leu\c stean, L., L\'{o}pez-Acedo, G.:
	Firmly nonexpansive mappings in classes of geodesic spaces.
	Trans. Amer. Math. Soc. {\bf 366}, 4299--4322 (2014)

\bibitem{Bru82}
	Bruck, R.E.:
	Random products of contractions in metric and Banach spaces.
	J. Math. Anal. Appl. {\bf 88}, 319--332 (1982)

\bibitem{BauBor94}
    Bauschke, H.H., Borwein, J.M.:
    Dykstra's alternating projection algorithm for two sets.
    J. Approx. Theory {\bf 79}, 418--443 (1994)
    
\bibitem{Gul91}
	G\"{u}ler, O.:
	On the convergence of the proximal point algorithm for convex minimization.
	SIAM J. Control Optim. {\bf 29}, 403--419 (1991)

\end{thebibliography}
\end{document}